\newcommand{\E}{\mathbb{E}}
    \newcommand{\Prb}{\mathbb{P}}
		\newcommand{\cE}{\mathcal{E}}
		\newcommand{\cF}{\mathcal{F}}
		\newcommand{\f}{\widetilde{\Phi}}
	\newcommand{\sR}{\mathbb{R}}
		\newcommand{\rN}{\mathrm{N}}
				\DeclareMathOperator{\cyl}{cyl}
				\DeclareMathOperator{\Var}{Var}
    \newcommand{\sZ}{\mathbb{Z}}
     \newcommand{\Z}{\mathbb{Z}}
     \newcommand{\R}{\mathbb{R}}
    \newcommand{\ep}{\varepsilon}
    \newcommand{\ind}{\mathbf{1}}
\theoremstyle{plain}   
\newtheorem{thm}{Theorem}
\numberwithin{thm}{section}
\newtheorem{lem}[thm]{Lemma}
\newtheorem{cor}[thm]{Corollary}
\newtheorem{prop}[thm]{Proposition}
\newtheorem{opn}{Open question}
\theoremstyle{remark}
\newtheorem{rk}{Remark}
\definecolor{darkgreen}{rgb}{0,0.6,0.05}
\title[Superconcentration for minimal surfaces]{Superconcentration for minimal surfaces in \\ first passage percolation and  \\ disordered Ising ferromagnets}
\author{Barbara Dembin  \;\;\;\;\;\;\;\;  Christophe Garban}
\address
{D-MATH, ETH Z\"urich, R\"amistrasse 101, 8092 Z\"urich, Switzerland }
\email{barbara.dembin@math.ethz.ch}
\address
{Universit\'e Claude Bernard Lyon 1, CNRS UMR 5208, Institut Camille Jordan, 69622 Villeurbanne, France \,, Institut Universitaire de France (IUF) and Universit\'e de Gen\`eve (Unige)}
\email{garban@math.univ-lyon1.fr}
\date{}
\begin{document}
\maketitle

\begin{abstract}
We consider the standard first passage percolation model on $\mathbb Z^ d$ with a distribution $G$ taking two values $0<a<b$. We study the maximal flow through the cylinder $[0,n]^ {d-1}\times [0,hn]$ between its top and bottom as well as its associated minimal surface(s). We prove that the variance of the maximal flow is superconcentrated, i.e. in  $O(\frac {n^{d-1}} {\log n})$, for $h\geq h_0$ (for a large enough constant $h_0=h_0(a,b)$). 

Equivalently, we obtain that the ground state energy of a disordered Ising ferromagnet in a cylinder $[0,n]^ {d-1}\times [0,hn]$ is superconcentrated when opposite boundary conditions are applied at the top and bottom faces and for a large enough constant $h\geq h_0$ (which depends on the law of the coupling constants).

 Our proof is inspired by the proof of Benjamini--Kalai--Schramm \cite{BKS}. Yet, one major difficulty in this setting is to control the influence of the edges since the averaging trick used in \cite{BKS} fails for surfaces. 

Of independent interest, we prove that minimal surfaces (in the present discrete setting) cannot have long thin chimneys.
\end{abstract}

\section{Introduction}

\subsection{Context and main results}

We focus in this paper on the fluctuations of the maximal flow (or equivalently of the minimal surface of the dual problem) through a cylinder in $\Z^d$ of the form $[0,n]^{d-1}\times[0, H]$, where the vertical height $H$ will be through most of this text of order $h n$. It is defined informally as follows (see Subsection \ref{s.serious} below for a more formal definition). Each non-oriented edge $e$ inside $[0,n]^{d-1}\times[0,h n]$ carries an i.i.d capacity $t(e)$ whose distribution takes two values $0<a<b$. Without much loss of generality, one can think of $t(e)\in \{1,2\}$ with equal probability.  The (vertical) maximum flow through this cylinder is informally the maximum amount of {\em water} which can be injected at the bottom, say, of the cylinder so that it can flow upwards in such a way that the amount of water flowing through any given edge $e$ is less or equal than $t(e)$. Let us denote this maximal flow by $\Phi=\Phi([0,n]^{d-1}\times\{0\}, H)$. By max-flow/min-cut principle, it is well-known that this maximal flow can be computed by minimizing the capacity over all possible cut-sets. I.e,
\begin{align*}\label{}
\Phi = \min_{E} \left\{\sum_{e\in E} t(e) \right\}\,,
\end{align*}
where the mimimum is taken over all cut-sets $E$ which separate the bottom $[0,n]^{d-1}\times \{0\}$ from the top $[0,n]^{d-1}\times \{H\}$. There may be several such minimizing  cut-sets $E$ and by duality each of those correspond to a minimal surface embedded in $\R^d$ (see Figure \ref{fig8}). 

\smallskip

In dimension $d=2$, the minimal cut-sets in $[0,n]\times[0,H]$ correspond to geodesics on the dual graph $(\Z^2)^*=\Z^2+(\tfrac 1 2, \tfrac 1 2)$ which connect the left and right boundaries of the rectangle.  The maximal flow can then be studied as a random metric problem in this special case and much is known about fluctuations, large-deviations etc. in this case.  Let us mention in particular the breakthrough work by Benjamini-Kalai-Schramm \cite{BKS} which implies in the present setting that $\Var[\Phi([0,n]\times \{0\}, H)]=O(\frac  n {\log n} )$ as long as $H=\Omega(n^\epsilon)$. Furthermore, in this $d=2$ case, the fluctuations are believed to be described as $n\to \infty$ by the {\em KPZ universality class} (in particular it is conjectured that $\Var[\Phi]\asymp n^{2/3}$, see for example \cite{johansson2000shape} where this is proved for directed last-passage percolation). 

\smallskip

In higher dimensions $d\geq 3$, the problem may no longer be formulated in terms of geodesics and is expressed instead in terms of minimal surfaces (of co-dimension 1). 
The analysis of such maximal flows/minimal surfaces in $d\geq 3$ was first considered in the seminal paper by Kesten for $d=3$: {\em Surfaces with minimal random weights and maximal flows: a higher dimensional version of first-passage percolation} (\cite{Kesten:flows}) where he obtained a law of large numbers for $\Phi$ as well as some large deviations estimates. Since the work \cite{Kesten:flows}, there has been a lot of activity on the analysis of the maximal flow $\Phi$: Kesten's results were extended by Zhang  \cite{Zhang2017} to any dimensions, and by Rossignol--Th\'eret in \cite{RossignolTheret08b} to any dimensions for tilted flat cylinders (with height $H=o(n)$). Cerf--Th\'eret  proved a law of large number for more general domains in \cite{CerfTheret09geoc}. They later studied the speed of upper and lower large deviations in \cite{CerfTheret09infc,CerfTheret09supc}.  Interestingly, upper large deviations are in $n^{d}$ while lower large deviations are in $n^{d-1}$.  In \cite{dembintheretcutsetld,Dembintheretldmaxflow}, Dembin--Th\'eret proved upper and lower large deviations principle for the maximal flow in general domains.

\medskip

Let us now introduce another setting where minimal surfaces appear in the same fashion. Consider the disordered Ising ferromagnet in  $[0,n]^{d-1}\times[0,h n]$ with opposite boundary conditions applied at the top and the bottom.
Each non-oriented edge $e$ inside $[0,n]^{d-1}\times[0,h n]$ carries an i.i.d coupling constant $J_e$ whose distribution takes two values $0<a<b$. For a configuration $\sigma\in\{-1,1\}^{[0,n]^{d-1}\times[0,h n]\cap \Z^d}$, its associated energy is 
\[H(\sigma)= - \sum_{e=\{x,y\}}J_e\sigma_x\sigma_y.\]
One can check that the ground state energy (i.e. the minimal energy) corresponds to $\Phi$ and the corresponding minimal surface corresponds to the interface of a ground state (i.e. a configuration achieving the minimal energy). This connection was mentioned for example in Licea--Newman \cite{LiceaNewman}.

\smallskip

To our knowledge, prior to this work, nothing was known on the fluctuations of $\Phi=\Phi([0,n]^{d-1}\times[0, H])$ (besides the easy upper bound $\Var[\Phi]= O(n^{d-1})$). As we shall explain further in the next subsection, this may be due to the following reason. A crucial step in the proof of Benjamini-Kalai-Schramm in \cite{BKS} is based on a beautiful averaging trick which no longer works with minimal surfaces. 

Our main result can be stated as follows. 
\begin{thm}\label{thm:main} For any $d\geq 2$ and any distribution $G$ on $0<a<b$, there exist $C>0$ and $h_0>0$, such that for any $n\ge1$ and $H\ge h_0 n$, we have
\[\Var (\Phi([0,n]^{d-1}\times \{0\}, H )\le C \frac{n^{d-1}}{\log n}\,.\]
\end{thm}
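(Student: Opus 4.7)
The plan is to follow the Benjamini--Kalai--Schramm scheme \cite{BKS}, combining Talagrand's $L^1$--$L^2$ variance inequality with sharp bounds on the influences of individual edges. Since $G$ is supported on two values, the environment is encoded by i.i.d.\ Bernoulli variables and one can speak of the influence $\operatorname{Inf}_e(\Phi)$ of an edge $e$ on $\Phi$. Talagrand's inequality then yields
\[\Var(\Phi)\,\le\,C\sum_{e}\frac{\operatorname{Inf}_e(\Phi)}{\log\bigl(1/\operatorname{Inf}_e(\Phi)\bigr)}\,,\]
so the theorem reduces to bounding (i)~the total influence $\sum_e \operatorname{Inf}_e(\Phi)$ and (ii)~the maximum influence $\max_e \operatorname{Inf}_e(\Phi)$.

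For (i) I would use that an edge has non-zero influence only when it belongs to some minimal cutset, together with the fact that the increments of $\Phi$ are bounded by $b-a$. This gives
\[\sum_{e}\operatorname{Inf}_e(\Phi)\,\le\,(b-a)\,\E\bigl[|\widehat E_{\min}|\bigr]\,,\]
for any measurably selected minimum cutset $\widehat E_{\min}$. The bound $\E[|\widehat E_{\min}|]=O(n^{d-1})$ is essentially contained in Kesten's construction of almost-flat cutsets, together with the Rossignol--Th\'eret / Cerf--Th\'eret refinements quoted in the introduction, so $\sum_e \operatorname{Inf}_e(\Phi)\le C n^{d-1}$. Plugging this into Talagrand's inequality, the desired $O(n^{d-1}/\log n)$ bound on $\Var(\Phi)$ reduces to any polynomial bound of the form $\max_e \operatorname{Inf}_e(\Phi)\le n^{-\delta}$ with $\delta>0$.

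The hard part, and the place where this paper must depart from \cite{BKS}, is precisely this maximum-influence bound. In the geodesic setting BKS obtain $\max_e \operatorname{Inf}_e=O(n^{-\delta})$ by averaging the passage time over many parallel translates of the endpoints and using that a fixed edge lies in $O(1)$ of the translated geodesics --- a property which crucially relies on geodesics being one-dimensional. For minimal surfaces this argument fails dramatically: a given edge $e$ typically belongs to many horizontally translated minimal surfaces, since the surfaces are $(d-1)$-dimensional. My plan is to circumvent this obstruction using the ``no long thin chimneys'' result announced in the abstract: it implies that the portion of a minimal surface intersecting a mesoscopic ball around $e$ is essentially flat, living in a slab of bounded vertical extent. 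Consequently, among horizontal translates of $e$ by vectors in a box $B_m$ of side $m\ll n$, only those translates whose vertical component is close to that of $e$ will be captured by the correspondingly shifted surface.

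Concretely, once the no-chimneys input is in place, I would compare $\operatorname{Inf}_e(\Phi)$ to an average of the influences over $|B_m|=m^{d-1}$ horizontal translates of $e$, which by step (i) combined with the chimney-free property should be at most $C n^{d-1}/m^{d-1}$. Choosing $m$ polynomial in $n$ then delivers the desired $\max_e \operatorname{Inf}_e(\Phi)=O(n^{-\delta})$. The assumption $H\ge h_0 n$ enters precisely here: it guarantees that, with overwhelming probability, the minimal surface stays in the bulk of the cylinder, so that the local surgery needed to transport influence from one edge to its horizontal translates is not spoiled by boundary effects and a macroscopic chimney-free piece of surface is available around a typical edge.
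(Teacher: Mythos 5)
Your outline correctly identifies the starting point (Talagrand plus an influence bound) but goes astray at exactly the step the paper identifies as the crux, and the route you propose would not close the gap.

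First, you aim to prove a \emph{pointwise} bound $\max_e \operatorname{Inf}_e(\Phi)\le n^{-\delta}$. The paper explicitly does \emph{not} prove this, and indeed the authors remark that they cannot rule out the existence of a few anomalously influential edges. What the paper actually proves (Proposition~\ref{prop:ubinfluence}) is the weaker statement that the \emph{number} of edges with influence $\ge n^{-\xi}$ is at most $n^{d-1-\xi}$, and then the Talagrand sum is split into a small high-influence set (contributing $O(n^{d-1-\xi})$) and the low-influence bulk (contributing $O(n^{d-1}/\log n)$). Your plan to reduce everything to a maximum-influence bound therefore aims at a statement the paper cannot establish.

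Second, the horizontal averaging you describe does not do what you want. In BKS the averaging is over the \emph{endpoints} of the geodesic, which changes the minimization problem; a fixed edge lies in $O(1)$ of the perturbed geodesics because geodesics are one-dimensional. Here the boundary data (the full top face and full bottom face of the cylinder) is invariant under horizontal translation, so there is no horizontal family of distinct minimization problems to average over; translating $e$ by a horizontal vector $v$ and ``correspondingly shifting the surface'' is just a change of variables that gives back $\operatorname{Inf}_e$ itself. The ``no long thin chimneys'' proposition does not rescue this: in the paper it is used for a completely different purpose, namely to rewrite $\Phi$ as $\min_i X_i$ over vertically translated sub-cylinders of height $\tfrac12 h_0 n$, which is what makes a \emph{vertical} translation argument possible in the first place. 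The genuine novelty of the paper's proof is (a) a penalization $Y_i$ governed by a random center $\mathfrak i_0$ (driven by $O(n^\epsilon)$ auxiliary Bernoulli bits included among Talagrand's variables), which forces the penalized minimizer away from the top and bottom faces (Proposition~\ref{prop:notX_1}); (b) a coupling argument (Lemma~\ref{lem;diffreg}) showing that influences of an edge and its \emph{vertical} translate by $2\mathbf e_d$ differ by at most $n^{-\epsilon/2}$; and (c) Zhang's subadditivity estimate for $\Phi$ over a grid of small sub-cylinders, which bounds the expected mass of the minimal surface inside the bad sub-cylinders. None of (a), (b), (c) appears in your proposal, and (c) in particular is a deep external input with no elementary substitute. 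Without it you have no way to show that a fixed edge, together with its many vertical translates (which by (b) all carry comparable influence), cannot all lie in the surface with non-negligible probability.

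In short: the reduction to Talagrand and the role of the no-chimneys lemma are correctly identified, but the core of the argument --- the penalization scheme enabling vertical-shift regularity, and Zhang's subadditivity controlling the high-influence set --- is missing, and the horizontal-averaging heuristic you substitute for it is not a valid argument.
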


As it has been identified in the seminal work by Chatterjee \cite{chatterjee2014superconcentration}, a variance of order $O(\frac{n^{d-1}}{\log n})$ versus a variance of order $\Omega(n^{d-1})$ induces a completely different behaviour of minimal cut-sets under small random perturbations of the capacities $\{t(e)\}_e$. Indeed, a variance negligible w.r.t $n^{d-1}$  corresponds to the phenomenon of {\em superconcentration} (\cite{chatterjee2014superconcentration}) and it implies a certain {\em chaoticity} property for the minimal cut-sets.  We shall illustrate this in Corollary \ref{c.chaotic} where we will rely on a mild extension of a very useful identity from \cite{tassion2020noise}.  See also the recent work of Chatterjee \cite{chatterjee2023spin} which analyzed the groundstate of an Ising model with non-ferromagnetic disordered coupling constants.

\smallskip

We complete our analysis of the fluctuations of $\Phi=\Phi([0,n]^{d-1}\times \{0\}, H )$ by the following easier lower bound on the variance. Its proof in Section \ref{s.thm2} will rely on the martingale decomposition method from  Newman--Piza \cite{newman1995divergence}.
\begin{thm}\label{thm:main2} Let $G$ be a distribution on $\{a,b\}$ such that $G(\{b\})>p_c$, where $p_c$ is the critical parameter for Bernoulli bond percolation on $(\sZ^d,\E^d)$. There exists a constant $c=c(G)>0$ such that for all $n,H\ge 1$, we have
\[\Var (\Phi([0,n]^{d-1}\times\{0\}, H))\ge c\frac{n^{d-1}}{H}\,.\]
\end{thm}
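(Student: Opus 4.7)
The strategy follows the Newman--Piza martingale decomposition of the variance, combined with a geometric lower bound on the number of vertical edges in any minimizing cut-set.

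Set $p := G(\{b\})$ and, for each edge $e$ of the cylinder, define the mean discrete derivative $\alpha_e := \E[\Phi\mid t(e)=b] - \E[\Phi\mid t(e)=a]$, which is non-negative by monotonicity of $\Phi$ in the capacities. Ordering the edges arbitrarily as $e_1,e_2,\ldots$ and forming the Doob martingale $M_i := \E[\Phi\mid t(e_1),\ldots,t(e_i)]$, orthogonality of the increments gives $\Var[\Phi]=\sum_i\E[(M_i-M_{i-1})^2]$; a short computation of each conditional variance combined with Jensen's inequality then yields the Poincar\'e-type lower bound
\[
\Var[\Phi] \,\ge\, p(1-p)\sum_{e}\alpha_e^2.
\]

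To lower bound $\sum_e\alpha_e$ along the vertical edges, I would combine a geometric fact with a percolation input. The Russo--Margulis identity identifies $\sum_e\alpha_e$ with $\frac{d}{dp}\E_p[\Phi]$, the derivative of the expected max-flow with respect to the Bernoulli parameter; the goal is to show that this derivative is at least $c_1(G)(b-a)n^{d-1}$ for all $n,H\ge1$. The geometric fact is that every cut-set separating the top from the bottom of the cylinder contains at least $n^{d-1}$ vertical edges, because the straight vertical lattice path through each of the $n^{d-1}$ columns must be blocked by at least one such edge. The hypothesis $G(\{b\})>p_c$ supplies the second ingredient: by supercritical $b$-percolation there are $\Omega(n^{d-1})$ essentially vertex-disjoint $b$-paths from top to bottom, each forcing the min-cut to contain a $b$-edge whose flipping really changes $\Phi$ (no nearby competing cut catches up), so that such edges contribute $\ge c(G)(b-a)$ each to $\alpha_e$.

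Finally, Cauchy--Schwarz over the $n^{d-1}H$ vertical edges of the cylinder gives
\[
\sum_{e \text{ vertical}}\alpha_e^2 \,\ge\, \frac{\bigl(\sum_{e \text{ vertical}}\alpha_e\bigr)^2}{n^{d-1}H} \,\ge\, \frac{c_1(G)^2(b-a)^2\,n^{d-1}}{H},
\]
and combining with the first display yields $\Var[\Phi]\ge c(G)n^{d-1}/H$, as desired. The principal obstacle is the ``stable pivotality'' step: the naive bound $\alpha_e\ge(b-a)\Prb[e\in S^\star]$ is false in general, since flipping $t(e)$ can cause the optimal cut-set to jump to a nearby competitor. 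Quantifying the density of stable pivotal edges in the minimizer, using the supercriticality of $b$-edges, is the technical heart of the argument; this is precisely where the assumption $G(\{b\})>p_c$ is essential.
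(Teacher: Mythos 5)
Your proposal follows the same route as the paper: the Newman--Piza martingale decomposition, the supercritical-percolation lower bound on the number of $b$-edges that disjoint open paths force into the minimal cut-set, and Cauchy--Schwarz over the $\asymp n^{d-1}H$ edges to convert a first-moment bound into a second-moment one. However, the step you single out as ``the technical heart'' --- controlling stable pivotality and ruling out that flipping $t(e)$ causes the optimal cut jump to a competitor --- is not actually an obstacle, and the paper needs no such argument. The point is that the relevant indicator is $\{e\in\cE_{min},\,t_e=b\}$, i.e.\ the event that $e$ is in the chosen minimizer \emph{for the configuration in which $t_e=b$}. On that event one has, deterministically,
\[
\Phi(t^a)\;\le\; T_{t^a}\bigl(\cE_{min}(t^b)\bigr)\;=\;T_{t^b}\bigl(\cE_{min}(t^b)\bigr)-(b-a)\;=\;\Phi(t^b)-(b-a)\,,
\]
because lowering one capacity only decreases the value of every competing cut-set, so the minimum cannot ``catch up.'' Hence $\alpha_e\ge(b-a)\,\Prb(e\in\cE_{min}\mid t_e=b)=(b-a)G(\{b\})^{-1}\Prb(e\in\cE_{min},t_e=b)$ unconditionally, which is exactly the inequality the paper plugs into the martingale bound. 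The hypothesis $G(\{b\})>p_c$ is used only to guarantee $\E[\#\{e\in\cE_{min}:t_e=b\}]\ge c\,n^{d-1}$ via the $\Omega(n^{d-1})$ disjoint $b$-open crossings of the cylinder; with this, your computation closes. (Your auxiliary geometric observation about $n^{d-1}$ vertical edges per cut-set is true but not needed: those edges might all carry capacity $a$ and then contribute nothing to $\alpha_e$; the percolation estimate is what supplies edges of capacity $b$.)
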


\smallskip
\smallskip

We now  introduce a slightly different model for which a greatly simplified version of our proof  also implies superconcentration (see Remark \ref{r.easier} below). In the same cylinder $[0,n]^{d-1} \times [0,H]$, we now assign i.i.d weights $\{t(x)\}$ to the vertices of the cylinder, again with a distribution $G$ on $0<a<b$.  We consider the following minimal weight
\begin{align*}\label{}
\Psi_{\mathrm{Lip}}=\Psi_{\mathrm{Lip}}([0,n]^{d-1}\times \{0\}, H) := \min_{\psi} \left\{ \sum_{u\in [0,n]^{d-1}}  t( u,\psi(u)) \right\}\,,
\end{align*}
where the minimum is taken over all 1-Lipschitz functions $\psi : [0,n]^{d-1} \to \{0,1,\ldots,H\}$ (i.e. such that $|\psi_i -\psi_j|\leq 1$ for any $i \sim j$ in $[0,n]^{d-1}$). We obtain in this setting the analog of Theorem \ref{thm:main}. 
\begin{thm}\label{thm:psi} There exist $C,c>0$ and $h_0>0$, both depending on $0<a<b$, such that for any $n\ge1$ and $H\ge h_0 n$, we have
\[ \left(c\frac{n^{d-1}}{H} \le \right) \Var (\Psi_{\mathrm{Lip}}([0,n]^{d-1}\times \{0\}, H )\le C \frac{n^{d-1}}{\log n}\,.\]
\end{thm}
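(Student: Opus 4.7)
The proof of Theorem \ref{thm:psi} runs parallel to that of Theorem \ref{thm:main}, but becomes considerably simpler thanks to the graph structure of Lipschitz surfaces. The lower bound $\Var(\Psi_{\mathrm{Lip}})\geq cn^{d-1}/H$ is obtained by the Newman--Piza martingale decomposition of Theorem \ref{thm:main2}, transposed to vertex weights: each of the $n^{d-1}$ columns contributes a quasi-independent variance of order $1/H$. I focus on the upper bound.

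The key structural observations that make things easier than in the cut-set setting are: (i) any feasible $\psi$ is the graph of a function over $[0,n]^{d-1}$, so the optimal surface (write $\psi^*$ for it) has $|\psi^*|=n^{d-1}$ vertices deterministically and $\sum_v \Prb[(u,v)\in\psi^*]=1$ for every column $u$; (ii) 1-Lipschitzness confines $\psi^*$ to a vertical strip of height at most $(d-1)n$. Observation (ii) automatically rules out the ``long thin chimneys'' that are the main obstacle for Theorem \ref{thm:main}, so that the averaging trick of Benjamini--Kalai--Schramm \cite{BKS} applies directly.

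Influences are bounded by
\begin{align*}
\E\bigl[(\Delta_x \Psi_{\mathrm{Lip}})^2\bigr] \leq 2(b-a)^2\,\Prb[x\in\psi^*],
\end{align*}
so by (i),
\begin{align*}
\sum_x \E\bigl[(\Delta_x\Psi_{\mathrm{Lip}})^2\bigr] \leq 2(b-a)^2\, n^{d-1}.
\end{align*}
The crux of the argument is then the uniform bound
\begin{align*}
\max_{x=(u,v)} \Prb[x\in\psi^*] \leq C/H,
\end{align*}
which I would establish by a BKS-style averaging over vertical shifts: since the weight distribution is invariant under the shift $(u,v)\mapsto(u,v+1)$ and (ii) ensures that a candidate surface lives in a strip of height $(d-1)n \ll H$, one may write $\Psi_{\mathrm{Lip}}=\min_k \Psi^{(k)}$ where $\Psi^{(k)}$ is the minimum over 1-Lipschitz $\psi$ taking values in $[k,k+(d-1)n]$, so that the $\Psi^{(k)}$'s are identically distributed and, for $|k-k'|$ of order $n$, independent. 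A quantitative concentration analysis of the random argmin $k^*$ of this nearly-i.i.d.\ sequence, together with vertical translation invariance, yields the approximate uniformity of the law of $\psi^*(u)$ on the bulk $[cn,H-cn]$, hence the $C/H$ bound there. Making this argmin concentration quantitative and handling the $O(n)$-thick boundary layers (whose total contribution to the variance is at most $O(n^d/\log n)$, absorbed into the main term when $H\asymp n$) is the principal technical obstacle.

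Plugging $\sum_x \E[(\Delta_x\Psi_{\mathrm{Lip}})^2]\leq C\,n^{d-1}$ and $\max_x \Prb[x\in\psi^*]\leq C/H$ into the real-valued Talagrand--BKS variance inequality (as in \cite{BKS}) yields
\begin{align*}
\Var(\Psi_{\mathrm{Lip}}) \leq \frac{C\, n^{d-1}}{\log H} \leq \frac{C'\, n^{d-1}}{\log n}
\end{align*}
for $H\geq h_0 n$, completing the upper bound.
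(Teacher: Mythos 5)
Your lower bound discussion matches the paper's approach (Newman--Piza martingale decomposition, as in the proof of Theorem \ref{thm:main2}). The upper bound is where you diverge from the paper's intended route and where a genuine gap appears. The paper's proof of Theorem \ref{thm:psi} (sketched in Remark \ref{r.easier}) reuses the full machinery of Theorem \ref{thm:main}: the penalisation $Y_i$ and random shift $\mathfrak i_0$, Proposition \ref{prop:notX_1} (the penalised minimiser is far from the boundary), and a simplified version of Proposition \ref{prop:ubinfluence}. The only simplifications are that the ``no long thin chimneys'' statement is free from 1-Lipschitzness (which you correctly observe) and that vertical cliffs do not exist. You instead attempt to bypass the penalisation scheme entirely by claiming a uniform influence bound $\max_x\Prb[x\in\psi^*]\le C/H$ via a BKS-style averaging over vertical shifts. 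This is precisely the step the paper's penalisation is designed to replace, and I do not think your substitute works as stated.

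Concretely, two issues. First, the family $(\Psi^{(k)})_k$ is not ``nearly i.i.d.'' in any sense that yields uniformity of the argmin: consecutive $\Psi^{(k)}$, $\Psi^{(k+1)}$ are built from almost the same vertex weights and are therefore near-perfectly correlated, while well-separated ones are independent but their minimum no longer reproduces $\Psi_{\mathrm{Lip}}$ (the optimal surface can straddle two disjoint strips). There is no off-the-shelf ``quantitative argmin concentration'' for such a sequence; making this rigorous is essentially equivalent to proving something like the paper's Proposition \ref{prop:notX_1}, and that proposition in turn \emph{uses} the penalisation $Y_i$ in an essential way in its proof. Second, the boundary-layer estimate is wrong in a way that matters. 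Near $v=0$ (or $v=H$) the constraint $\psi\ge 0$ is active, and the natural heuristic gives $\Prb[\psi^*(u)=0]$ of order $n/H$, i.e.\ $\Theta(1/h_0)$ for $H\asymp h_0 n$ --- a \emph{constant}, not decaying in $n$. Since the log-gain in Talagrand's inequality is then only $\log h_0 = O(1)$, and since the high-influence boundary vertices can in principle carry a constant fraction of the total budget $\sum_x\Prb[x\in\psi^*]=n^{d-1}$, their contribution to the Talagrand sum is potentially $\Theta(n^{d-1})$, not $O(n^{d-1}/\log n)$, and certainly not ``$O(n^d/\log n)$ absorbed into the main term'' as you write. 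This is exactly the difficulty the penalisation $Y_i$ resolves: it forces $\mathfrak j_0$ (hence the surface) to stay $n^\delta$ away from the boundaries with probability $1-O(n^{-1/2})$, which is a genuine probabilistic input, not a formality one can wave away. If you fill in these two gaps you will most likely reconstruct the paper's penalisation scheme; as written, the proposal is incomplete precisely at the step where the paper does its real work.
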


To conclude this introduction, we wish to emphasise that if minimal surfaces happen to be anchored at some deterministic curve along  the boundary of the cylinder, then we expect a completely different scenario for their fluctuations in large enough dimensions $d$. We discuss two possible such situations:
\begin{enumerate}
\item Instead of considering the maximum flow $\Phi$ from the bottom $[0,n]^{d-1}\times \{0\}$ to the top $[0,n]^{d-1}\times \{H\}$, let us consider  the maximal flow $\tau([0,n]^{d-1}\times\{0\},H)$ between the bottom half and the top half of the cylinder, (i.e. between $\partial ([0,n]^{d-1}\times [0,H]) \cap \{x\in \R^d, x\cdot \mathbf e_d < \tfrac H 2]\}$ and $ \partial ([0,n]^{d-1}\times [0,H]) \cap \{x\in \R^d, x\cdot \mathbf e_d > \tfrac H 2]\}$). Then, the associated minimal surfaces are anchored in the boundary of the meridian plane of the cylinder $[0,n]^{d-1} \times \{\tfrac H 2 \}$. For a formal definition, we refer to \cite{RossignolTheret08b}. In high dimensions, by analogy to other models of surface (see in particular \cite{peled2017high}), we expect that the anchored surface is localized, that is, there exists a constant $C>0$ such that for any $n$, almost all the surface is within distance $C$ of the meridian plane $[0,n]^{d-1} \times \{\tfrac H 2 \}$. In that case, by a similar proof as Theorem \ref{thm:main2}, 
 we can prove that there exists $c>0$ depending on $G$ such that for all $n,H\ge 1$
\[\Var (\tau([0,n]^{d-1}\times\{0\}, H))\ge cn^{d-1}\,.\]
This implies that in high dimensions, we don't expect the variance of the anchored surface to be superconcentrated. This is another hint that minimal surfaces behave very differently as geodesics (of codimension $d-1$) in standard first percolation theory.

\item In the spirit of the easier Theorem \ref{thm:psi}, we may further restrict the 1-Lipschitz functions $\psi$ to be equal to $\tfrac H 2$ along $\partial [0,n]^{d-1}$. The localisation result for uniform such 1-Lipschitz functions proved by Peled in \cite{peled2017high} highly suggests that in high enough dimension, the variance of the associated minimal weight $\Psi_{\mathrm{Lip}}^{\mathrm{anchored}}$ will be $\geq c n^{d-1}$. 
\end{enumerate}
We shall discuss this expected different behaviour further in Proposition \ref{p.anchored} as well as in open question \ref{op.anchored}.

\subsection{Idea of  proof}
$ $ \smallskip

\noindent
{\bf Benjamini-Kalai-Schramm and Talagrand.}
As we mentioned above, a similar theorem was first proved for the study of passage times in first passage percolation by Benjamini--Kalai--Schramm \cite{BKS}. 
A key ingredient of \cite{BKS} which we will also use is Talagrand's inequality \cite{talagrand} (see Theorem \ref{thm:Talagrand}). 
 To obtain a ``sub-surface'' (i.e. $o(n^{d-1})$) upper-bound using Talagrand's inequality, one needs to prove that most edges have a low influence on the maximal flow $\Phi$. In \cite{BKS}, the influence of an edge is related to the probability that the geodesic goes through that edge.  In our setting, it will be related to the probability that the minimal surfaces goes through the plaquette dual to that edge. We refer to  \cite{garban2014noise, duminil2019sharp} for background on the interplay between Boolean functions and statistical physics.
  
\smallskip 
 The main difficulty of this approach, already in \cite{BKS}, is that it happens to be very challenging to upper-bound the influence of any fixed given edge.  In fact, for the passage times in first passage percolation, proving that the maximum influence  in the bulk goes to zero (this is known as the {\em BKS midpoint problem}) was only proved a few years ago by Damron--Hanson \cite{damron2017bigeodesics}, Ahlberg--Hoffman \cite{ahlberghoffman}  and was recently solved quantitatively by Dembin--Elboim--Peled in \cite{DembinElboimPeled}.
\smallskip
 
  To circumvent this, Benjamini--Kalai--Schramm relied in \cite{BKS} on a very nice averaging trick by randomizing the endpoints of the desired passage times. Since the randomized endpoints remain close to the original endpoints of the geodesic, it follows that the difference of passage times between the new geodesic and the original geodesic is negligible compared to the upper bound on standard deviation $\sqrt{n}$. 

\smallskip 
  
\noindent  
{\bf No averaging trick for surfaces.}
 We now explain why this averaging trick fails for surfaces.
 Indeed, consider two surfaces anchored respectively in the boundary of $[0,n]^{d-1}\times \{0\}$ and $[0,n]^{d-1}\times \{1\}$, the best control we can get on the difference of capacity is of order $n^{d-2}$. When $d\ge 3$, we have $n^{d-2}\ge n^{(d-1)/2}$ where $n^{(d-1)/2}$ is the order of the upper bound for the standard deviation for surfaces (obtained for example via Efron-Stein). This shows that as soon as $d\geq 3$, we need to proceed differently as in \cite{BKS} and a close inspection of influences will be needed.
 
\smallskip 
\smallskip

\noindent
{\bf Idea and structure of the proof.}  
We start by noting that if we were considering a maximal flow in a transitive graph, for example the maximal flow with non-trivial homology along the $d^{th}$ direction in a  torus $\mathbb{T}_n^{d-1}\times \mathbb{T}_{H}$, then a direct application of Talagrand's inequality (Theorem \ref{thm:Talagrand}) would readily imply fluctuations of order at most  $n^{\frac {d-1} 2}/\sqrt{\log n}$ for any $H\geq \Omega(n^\epsilon)$ just by using the fact that all edges have the same influence by transitivity of the graph. 

In our present case, despite the lack of transitive action acting on the cylinder $[0,n]^{d-1}\times [0,H]$, the rough idea is that if the minimal surface $\mathcal{E}_n$ (chosen among all possible minimal surfaces in any deterministic way, say) happens to be with high probability at distance at least 1 from the top and bottom boundary, then if we shift vertically by one the set of capacities $\{t(e)\}$  (and also replace the missing bottom capacities by the top capacities that went off the cylinder), one may guess that, again with high probability, the new minimal surface $\mathcal{E}_n(t_{\mathrm{shifted}})$ will be nothing but the vertical shift of $\mathcal{E}_n(t)$. Of course what could prevent this to happen comes from the effect of shuffling the top and bottom capacities.
If one could prove that these two claims indeed happen with high enough probability, then it would imply that all edges in a vertical column have a very close influence which would allow us to conclude using Talagrand's inequality \ref{thm:Talagrand}. 
 
 \smallskip

In the end, we do not quite succeed making this intuition rigorous but our proof is strongly influenced by analysing the effect of such vertical shifts. 
The proof of Theorem \ref{thm:main} will be based on the following three main steps which are of independent interest and do not have an analog in the analysis of Benjamini-Kalai-Schramm in \cite{BKS}:
\begin{enumerate}
\item First, we shall prove that minimal surfaces cannot wiggle too much vertically. This will be achieved in Proposition \ref{lem:insidecyl}. 
A similar phenomenon is known to arise in the analysis of {\em minimal surfaces}, see  \cite{david1998quasiminimal}. Our proof in the discrete setting will rely on the isoperimetric bounds in $\mathbb{Z}^d$ obtained in \cite{bollobas1991edge}. 
This proposition is the technical step which is causing the restriction $h\geq h_0$ in our main theorem. Its proof will be given in Section \ref{s.MS}.

\item Second, we need to know that minimal surfaces are unlikely to stay too close to the top and bottom boundaries. We will not prove this for the true minimal surfaces which lead to the maximal flow $\Phi([0,n]^{d-1}\times \{0\}, H)$ but rather for a slightly modified notion of maximal flow in which minimal surfaces too close to the top and bottom boundaries receive a {\em penalisation}. This modified notion of maximal flow is called $\f$ (see~\eqref{e.f}) and is introduced in Section \ref{s.mainP}. 
For this modified maximal flow $\f$, we can show that the associated minimal surfaces are typically away from the top and bottom boundaries. This is the purpose of Proposition \ref{prop:notX_1}. 
\item Finally, the last difficulty we are facing is the possibility that the minimal surface (for the modified $\f$) may often produce a high vertical cliff at certain locations. This would make the influence profile too inhomogeneous to allow us to control the magnitude of influences.  
Using a deep estimate from Zhang's work \cite{Zhang2017} 
(inspired by the original work by Kesten \cite{Kesten:flows}), we will prove Proposition \ref{prop:ubinfluence} which shows that there are only few edges that may carry a large influence (we believe such edges do not exist but we cannot rule this out rigorously). Its proof will be the purpose of Section \ref{s.inf}. 
\end{enumerate}

\begin{rk}\label{r.easier}
We claim that one can prove Theorem \ref{thm:main2} using the same proof, except there are several drastic simplifications. First, the absence of long thin chimneys (Proposition \ref{lem:insidecyl}) is obvious in this case. Also, vertical cliffs do not exist by definition  (thanks to the 1-Lipschitz condition) and as such Proposition \ref{prop:ubinfluence} is much easier to prove in this case. We leave the details to the reader. 
\end{rk}



\subsection{Background}
\subsubsection*{\bf Definition of maximal flow}\label{s.serious}
 We now provide a more formal definition of maximal flows/minimal surfaces.
 We consider a first passage percolation on the graph $(\sZ ^d,\E ^d)$ where $\E^d$ is the set of edges that link  all the nearest neighbors for the Euclidean norm in $\sZ^d$. Write $(\textbf e_1,\dots,\textbf e_d)$ for the canonical basis of $\sR^d$. We consider a distribution $G$ on $\mathbb R_+$. For each edge $e$ in $\E^d$ we assign a random variable $t_e$ of distribution $G$ such that the family $(t_e)_{e\in\E ^d}$ is independent.


Let $A\subset \sR^{d-1}\times \{0\}$. Let $h>0$, we denote by $\cyl(A,h)$ the cylinder of basis $A$ and height $h$ defined by 
$$\cyl(A,h):=\left\{x+t\textbf e_d\, : \,  x\in A,\, t\in[0,h]\right\}\,.$$
Define the discretized versions $B(A,h)$ and $T(A,h)$ of the bottom and the top of the cylinder $\cyl(A,h)$
$$B(A,h):= \left\{x\in\sZ^d\cap\cyl(A,h)\,:\,\begin{array}{c}
\exists y \notin \cyl(A,h),\, \langle x,y \rangle\in\E^d \\\text{ and $\langle x,y \rangle$ intersects } A
\end{array} \right\}$$
and
$$T(A,h):= \left\{x\in\sZ^d\cap\cyl(A,h)\,:\,\begin{array}{c}
\exists y \notin \cyl(A,h),\, \langle x,y \rangle\in\E^d \\\text{ and $\langle x,y \rangle$ intersects } A+h \textbf e_d
\end{array} \right\}\,.$$

 Let $E\subset \E^d$ be a set of edges. We say that $E$ cuts $B(A,h)$ from $T(A,h)$ in $\cyl(A,h)$ (or is a cutset, for short) if any path from $B(A,h)$ to $T(A,h)$ in $\cyl(A,h)$ intersects $E$. 

We associate with any set of edges $E\subset \E^d$ its capacity $T(E)$ defined by \[T(E):=\sum _{e\in E} t_e\,.\] We define the maximal flow from the top to the bottom of the cylinder $\cyl(A,h)$
\begin{align}\label{e.PHI}
\Phi(A,h):=\min\{T(E)\,:\, E \text{ cuts $T(A,h)$ from $B(A,h)$ in $\cyl(A,h)$}\}\,.
\end{align}
As already mentioned in the introduction, we use the terminology maximal flow as by max-flow min-cut theorem, the dual problem of finding minimal surface boils down to computing the maximal flow. 

\smallskip

From now on, we assume that $G$ can only take two values $0<a<b$. See Open Question \ref{op.GeneralG} for a discussion of possible extensions to more general distributions using for example \cite{BenRos2008,Damron2015}.

\subsubsection*{\bf Dual representation of cutsets}
Let $E\subset \E^d$ be a cutset separating $T(A,h)$ from $B(A,h)$ in $\cyl(A,h)$. The set $E$ is a $(d-1)$-dimensional object, that can be seen as a surface. To better understand this interpretation in term of surfaces, we can associate with each edge $e\in E$ a small plaquette $e^*$. The plaquette $e^*$ is an hypersquare of dimension $d-1$ whose sides have length one and are parallel to the edges of the graphs, such that $e^*$ is normal to $e$ and cuts it in its middle. We also define the dual of a set of edge $E$ by $E^*:=\{e^*,\,e\in E\}$ (see Figure \ref{fig8}). Roughly speaking, if the set of edges $E$ cuts $T(A,h)$ from $B(A,h)$ in $\cyl(A,h)$, the surface of plaquettes $E^*$ disconnects $T(A,h)$ from $B(A,h)$ in $\cyl(A,h)$.  Note that, in dimension $2$, a surface of plaquettes is very similar to a path in the dual graph of $\sZ^2$ and thus the study of minimal cutsets is very similar to the study of geodesics.
\begin{figure}[h]
\def\svgwidth{0.4\textwidth}
\begin{center}
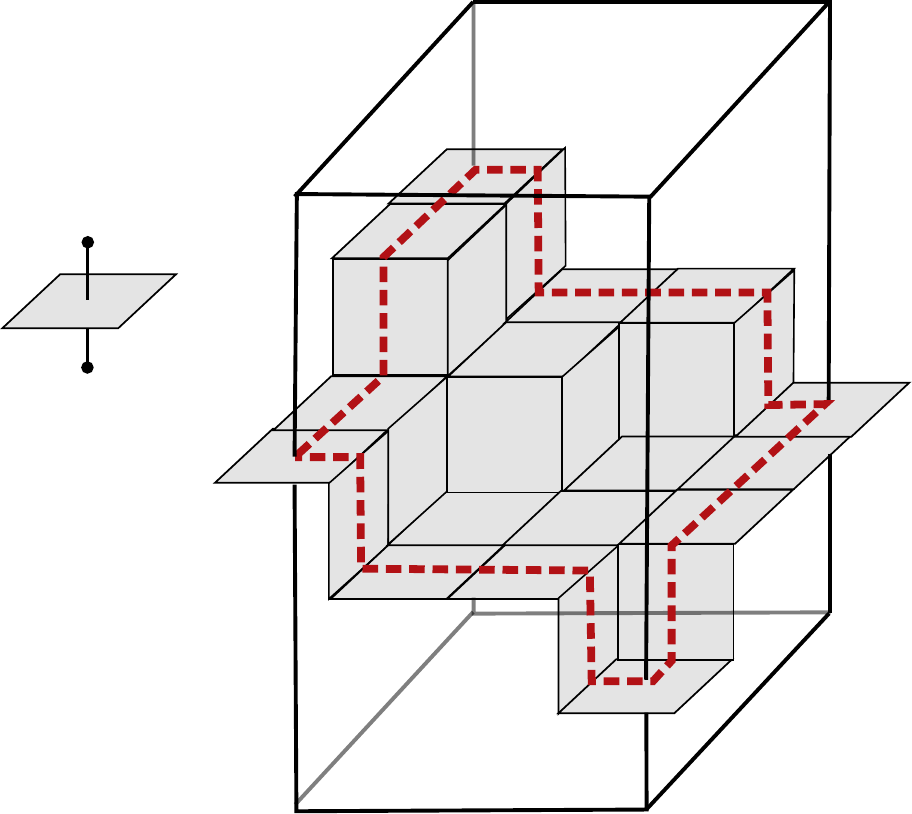
\caption[fig8]{\label{fig8}The dual of a cutset between the top and the bottom of a cylinder for $d=3$.}
\end{center}
\end{figure}

\subsubsection*{\bf Concentration inequalities}

Let $J$ be a finite set of indices. For $\omega\in\{a,b\}^J$ and $j\in J$ denote $\sigma_j \omega$ the function that switches the value in the $j$-th coordinate.
For $f:\{a,b\}^J\rightarrow \sR$, denote
\[\partial _ j f:=\frac{f-f\circ\sigma_j}{2}\,.\]
For $p\in(0,1)$, consider $\mu_p$ the product measure on $\{a,b\}^J$ which gives $a$ with probability $p$ and $b$ with probability $1-p$.
We denote $\|f\|_2^2=\int f^2d\mu_p.$
\begin{thm}[Talagrand's inequality \cite{talagrand} Theorem 1.5]\label{thm:Talagrand}Let $f:\{a,b\}^J\rightarrow \sR$ and $p\in\{0,1\}$.
We have
\begin{equation}
    \Var(f)\le C\log\frac{2}{p(1-p)}\sum_{j\in J}\frac{\|\partial_j f\|_2^2}{1+\log ( \|\partial_j f\|_2/\|\partial_j f\|_1)}
\end{equation}
where $C$ is a universal constant.
\end{thm}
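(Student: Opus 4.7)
The plan is to follow Talagrand's original argument based on hypercontractivity of the noise semigroup on the biased hypercube. After an affine change of variables $\{a,b\}^J \to \{-1,1\}^J$, I may assume $f : \{-1,1\}^J \to \sR$ and that $\mu_p$ is the $p$-biased product measure. Expand $f$ in the Fourier--Walsh basis $\{\chi_S\}_{S \subset J}$ orthonormal in $L^2(\mu_p)$: $f = \sum_{S} \hat f(S)\, \chi_S$. A direct computation shows that $\widehat{\partial_j f}(S) = \hat f(S)\, \mathbf{1}_{\{j \in S\}}$ (up to a normalization absorbed into constants), so that by Parseval
\[
\Var(f) \;=\; \sum_{S \neq \emptyset} \hat f(S)^2 \;=\; \sum_{j \in J} \sum_{S \ni j} \frac{\hat f(S)^2}{|S|}\,.
\]

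The heart of the proof is then the following lemma: for any $g : \{-1,1\}^J \to \sR$,
\[
\sum_{S} \frac{\hat g(S)^2}{|S|+1} \;\leq\; \frac{C\,\log\frac{2}{p(1-p)}\,\|g\|_2^2}{1 + \log( \|g\|_2 / \|g\|_1 )}\,.
\]
Applying this lemma to $g = \partial_j f$, and using the computation above to replace the weight $(|S|+1)^{-1}$ by $|S|^{-1}$ after an index shift, bounds the inner sum $\sum_{S \ni j} \hat f(S)^2 / |S|$ by the $j$-th term on the right-hand side of Theorem \ref{thm:Talagrand}. Summing over $j \in J$ then yields the stated variance inequality.

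The main obstacle is establishing this key lemma, which is exactly where hypercontractivity is essential. The Bonami--Beckner inequality on the $p$-biased hypercube asserts $\|T_\rho g\|_2 \leq \|g\|_{1+\eta(\rho,p)}$ for a suitable $\eta$, and hence $\sum_S \rho^{2|S|}\hat g(S)^2 \leq \|g\|_q^2$ with $q = 1+\eta(\rho,p)$. Interpolating $\|g\|_q$ between $\|g\|_1$ and $\|g\|_2$ by log-convexity of $L^r$-norms and optimizing over $\rho \in (0,1)$ converts the geometric weight $\rho^{2|S|}$ into the harmonic weight $(|S|+1)^{-1}$, while producing the ratio $\|g\|_2/\|g\|_1$ inside a logarithm. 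The factor $\log\frac{2}{p(1-p)}$ reflects the weakened hypercontractive constant on the biased cube (as worked out by Oleszkiewicz); in the symmetric case $p = 1/2$ this factor is an absolute constant and the argument is classical.
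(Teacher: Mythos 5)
This theorem is quoted from Talagrand's paper \cite{talagrand} without proof, so there is no internal argument in the paper for you to match. Your sketch is a faithful outline of Talagrand's own hypercontractivity proof, so it is the ``same approach'' as the source the paper defers to.

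One bookkeeping point in your Fourier step is worth tightening. In the $p$-biased setting, the flip derivative $\partial_j f = (f - f\circ\sigma_j)/2$ does \emph{not} satisfy $\widehat{\partial_j f}(S) = \hat f(S)\,\mathbf 1_{\{j\in S\}}$: writing $f = g(x_{-j}) + x_j h(x_{-j})$ one gets $\partial_j f = x_j h$, while the object whose Fourier support lies over $S\not\ni j$ is the Efron--Stein/Walsh derivative $D_j f = 2\sqrt{p(1-p)}\,h$, with $\widehat{D_j f}(T)=\hat f(T\cup\{j\})\,\mathbf 1_{\{j\notin T\}}$. The fix is simply to apply your key lemma to $g = D_j f$ rather than to $\partial_j f$: then the weight $(|T|+1)^{-1}$ becomes $|S|^{-1}$ under $S = T\cup\{j\}$ (your ``index shift'' is exactly this), and since $|D_j f| = 2\sqrt{p(1-p)}\,|\partial_j f|$ pointwise, the ratio $\|D_j f\|_2/\|D_j f\|_1 = \|\partial_j f\|_2/\|\partial_j f\|_1$ is unchanged and the factor $4p(1-p)\le 1$ in $\|D_j f\|_2^2$ is absorbed into the constant. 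With that adjustment the decomposition $\Var(f) = \sum_j\sum_{S\ni j}\hat f(S)^2/|S|$ plugs in cleanly, and the rest of your sketch (biased Bonami--Beckner, interpolating $\|g\|_q$ between $\|g\|_1$ and $\|g\|_2$, optimizing over the noise rate $\rho$) is the standard route and is correct.
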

The following proposition is an upper bound on the variance using Efron--Stein inequality.
\begin{thm}[Efron-Stein inequality]\label{thm:efronstein}Let $X=(X_1,\dots,X_n)$ and $X'=(X'_1,\dots,X'_n)$ be two independent and identically distributed vectors taking values in a space $\mathcal X^n$. Let $f:\mathcal X ^n \to \mathbb R$. We have 
\[\Var(f(X))\leq\sum_{i=1}^n \E\left[(f(X)-\E[f(X^{(i)})|X])^2\right]=\sum_{i=1}^n \E\left[(f(X)-f(X^{(i)}))^2_-\right],\]
where $X^{(i)}:=(X_1,\dots,X_{i-1},X'_i,X_{i+1},\dots,X_n)$ and $x_-=\max(-x,0)$.

\end{thm}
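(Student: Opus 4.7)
This inequality is classical; I would prove it via the Doob martingale decomposition of $f(X) - \E[f(X)]$ along the product structure of $X$, combined with the elementary one-variable identity $\Var(Y) = \tfrac{1}{2}\E[(Y - Y')^2]$ for an independent copy $Y'$ of $Y$.

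Concretely, introduce the filtration $\cF_i := \sigma(X_1, \dots, X_i)$ (with $\cF_0$ trivial) and the martingale differences $V_i := \E[f(X)\,|\,\cF_i] - \E[f(X)\,|\,\cF_{i-1}]$. Orthogonality in $L^2$ gives $\Var(f(X)) = \sum_{i=1}^n \E[V_i^2]$. Writing $g_i(X_1, \dots, X_i) := \E[f(X)\,|\,\cF_i]$, the random variable $V_i$ is, conditionally on $\cF_{i-1}$, a centered function of the single coordinate $X_i$. Applying the one-variable identity with the independent copy $X'_i$ and a single use of Jensen's inequality to pass from $g_i$ down to $f$, one arrives at $\E[V_i^2] \leq \tfrac{1}{2}\E[(f(X) - f(X^{(i)}))^2]$; summing over $i$ then yields the sharp form $\Var(f(X)) \leq \tfrac{1}{2}\sum_i \E[(f(X) - f(X^{(i)}))^2]$.

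It remains to match the two right-hand expressions in the statement, which is where exchangeability of $X_i$ and $X'_i$ enters twice. On the one hand, $f(X) - f(X^{(i)})$ has a symmetric distribution, so $\E[(f(X) - f(X^{(i)}))_-^2] = \tfrac{1}{2}\E[(f(X) - f(X^{(i)}))^2]$. On the other hand, the inner conditional expectation $\E[f(X^{(i)})\,|\,X]$ depends only on $X_{-i} := (X_j)_{j \neq i}$ and in fact equals $\E[f(X)\,|\,X_{-i}]$, so $\E[(f(X) - \E[f(X^{(i)})\,|\,X])^2] = \E[\Var(f\,|\,X_{-i})]$, which in turn equals $\tfrac{1}{2}\E[(f(X) - f(X^{(i)}))^2]$ by the same one-variable resampling identity applied to the single coordinate $X_i$.

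The only mildly delicate step is the single application of Jensen's inequality that transfers the difference from $g_i$ down to $f$; everything else is orthogonality of martingale differences or an elementary symmetry argument. As this is a textbook inequality, I do not expect any real obstacle.
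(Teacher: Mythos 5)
The paper states Efron--Stein without proof; it is a standard result quoted for later use, so there is no argument in the source to compare against. Your sketch is the classical textbook proof via the Doob martingale decomposition along the product filtration, and it is correct: orthogonality of the differences $V_i$ gives $\Var(f)=\sum_i\E[V_i^2]$, the representation $V_i=\E\bigl[f-\E[f\,|\,X_{-i}]\,\big|\,\cF_i\bigr]$ plus conditional Jensen gives $\E[V_i^2]\le\E\bigl[\Var(f\,|\,X_{-i})\bigr]$, and the one-variable resampling identity together with the exchangeability of $X_i$ and $X_i'$ (which makes $f(X)-f(X^{(i)})$ symmetric) shows that all three quantities $\E\bigl[\Var(f\,|\,X_{-i})\bigr]$, $\E\bigl[(f(X)-\E[f(X^{(i)})\,|\,X])^2\bigr]$ and $\E\bigl[(f(X)-f(X^{(i)}))_-^2\bigr]$ equal $\tfrac12\E\bigl[(f(X)-f(X^{(i)}))^2\bigr]$.
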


\section{Proof of the main theorem}\label{s.mainP}

In this section, we state the main intermediate Propositions which were mentioned in the  Section {\em idea of proof} and which will be proved in the next two Sections. We also implement the {\em penalisation scheme} used to ``localize'' the optimal surface away from the top and bottom boundaries. This will be the purpose of the re-weighting function $Y_{i}$ below.  Finally, using these ingredients we give the proof of  Theorem \ref{thm:main}. 

\subsubsection*{\bf Geometric control on  minimal surfaces.} The proposition stated below will be proved in Section \ref{s.MS}.


\begin{prop}[``Absence of long thin chimneys''] \label{lem:insidecyl}Fix  $0<a<b$. There exists an even $h_0>0$ depending only on $0<a<b$ such that for any $n\ge 1$, $H\ge \frac{1} 2 h_0 n$ and any configuration of capacities in $\{a,b\}$ assigned to the edges of $[0,n]^{d-1}\times [0,H]$, all minimal-cut sets $E$ (i.e. that achieve the infimum in $\Phi([0,n]^{d-1}\times\{0\}, H)$ defined in~\eqref{e.PHI}) are contained in a cylinder of vertical height bounded by $\tfrac1 2 h_0n$. I.e. for any  minimal cut-set $E$, there exists some $u\ge 0$  such that $E\subset [0,n]^{d-1}\times [u,u+ \frac 1 2 h_0n]$.
\end{prop}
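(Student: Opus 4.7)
The statement is deterministic (it holds pointwise in the configuration of capacities), so I argue by contradiction: assume some minimal cut-set $E$ in $\cyl([0,n]^{d-1},H)$ has vertical extent greater than $\tfrac12 h_0 n$, and derive that $\capa(E)$ exceeds the capacity of the trivial horizontal slab cut (which is at most $b(n+1)^{d-1}$), contradicting minimality. Let $A$ be the union of components of $\cyl\setminus E$ containing the bottom face $B$, and $A^*$ the analogous top component. A standard minimality argument shows $\cyl\setminus E$ has exactly these two components, $A$ is connected, and $E=\partial_e A$ inside $\cyl$. Set $k_0:=\max\{k: [0,n]^{d-1}\times\{k\}\subseteq A\}$, $k^+:=\max\{x_d:x\in A\}$, and $\ell:=k^+-k_0$. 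Since $A$ and $A^*$ partition the cylinder vertices, the slab at height $k^++1$ lies entirely in $A^*$, so the vertical extent of $E$ is bounded by $\ell+1$, and the contradiction hypothesis becomes $\ell\geq \tfrac12 h_0 n-1$.

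For each $j\in\{1,\dots,\ell\}$, set $N_j:=|A\cap([0,n]^{d-1}\times\{k_0+j\})|$. Consider the modified vertex set $A':=A\cap\{x_d<k_0+j\}$, which still separates top from bottom. The minimality of $\capa(E)=\capa(\partial_e A)$ gives $\capa(\partial_e A')\geq\capa(\partial_e A)$, which unfolds to the capacity-exchange inequality
\[
\capa(\alpha^j)\ \geq\ \capa(\gamma^j),
\]
where $\alpha^j$ is the set of $A$-internal edges crossing the horizontal level $k_0+j-\tfrac12$ (so $|\alpha^j|\leq N_j$) and $\gamma^j\subseteq\partial_e A$ is the boundary of the removed region $A\cap\{x_d\geq k_0+j\}$ inside $\cyl$. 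Since capacities lie in $\{a,b\}$, this forces $a\,|\gamma^j|\leq b\,N_j$. The key input is the Bollob\'as--Leader edge-isoperimetric inequality \cite{bollobas1991edge} in $[0,n]^{d-1}$: for non-empty $S\subsetneq [0,n]^{d-1}$ with $|S|\leq\tfrac12(n+1)^{d-1}$ one has $|\partial_e^{[0,n]^{d-1}}S|\geq c_d|S|^{(d-2)/(d-1)}$. Since $\gamma^j$ contains the horizontal edge boundary of $A$ in every slab at height $\geq k_0+j$, I obtain the recursion
\[
N_j\ \geq\ \frac{c_d\,a}{b}\sum_{k=j}^{\ell} N_k^{(d-2)/(d-1)}.
\]
From this I plan to prove by downward induction on $j$ that $N_j\geq \kappa_d(a/b)^{d-1}(\ell-j+1)^{d-1}$ for a constant $\kappa_d>0$ depending only on $d$. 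The base case $j=\ell$ (single-top-slab version) combined with edge-iso gives $N_\ell\gtrsim (a/(b-a))^{d-1}$; the inductive step follows by plugging the hypothesis into the recursion and using $\sum_{i=1}^m i^{d-2}\geq m^{d-1}/(d-1)$, with $\kappa_d$ chosen small enough to absorb the loss factor $((\ell-j)/(\ell-j+1))^{d-1}\geq 2^{-(d-1)}$ incurred at each step. Taking $j=1$ then gives $\kappa_d(a/b)^{d-1}\ell^{d-1}\leq N_1\leq (n+1)^{d-1}$, so $\ell\leq \kappa_d^{-1/(d-1)}(b/a)(n+1)=:C(a,b,d)\,n$. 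Choosing $h_0$ to be any even integer larger than $4C(a,b,d)+4$ contradicts $\ell\geq\tfrac12 h_0n-1$ and completes the argument.

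The main obstacle I anticipate is handling those slabs where $N_k>\tfrac12(n+1)^{d-1}$, since the Bollob\'as--Leader bound in the form above then applies only to the complement and the direct lower bound on $|\gamma^j|$ weakens. My plan is to exploit the $A\leftrightarrow A^*$ symmetry: at any slab where $N_k>\tfrac12(n+1)^{d-1}$, the complementary mass in $A^*$ at that slab is below half, and the very same bootstrap can be run on the anti-chimney of $A^*$ descending from the top. A secondary technical point is to verify carefully that the dimensional constants close throughout the induction; this amounts to choosing $\kappa_d$ at the outset as a sufficiently small power of $c_d/(2^{d-1}(d-1))$. The hypothesis $H\geq\tfrac12 h_0 n$ serves only to make the statement non-vacuous (for shorter cylinders the conclusion is automatic).
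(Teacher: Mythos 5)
Your overall strategy---scan horizontal slices of the minimal cut, use the Bollob\'as--Leader edge-isoperimetric inequality in $[0,n]^{d-1}$, and derive a polynomial growth recursion for the cross-sections from the minimality of $E$---is genuinely the same philosophy as the paper's. The component decomposition of $\cyl\setminus E$ into exactly $A$ and $A^*$ by minimality, the exchange inequality obtained by truncating $A$ at a given level, and the downward induction $N_j\gtrsim(\ell-j+1)^{d-1}$ all match the paper's mechanism (there it runs through the sets $A(i)$, a slightly different, locally-defined object, and the exchange lemma appears as Lemmas~\ref{l.747} and~\ref{l.748}).

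The gap is precisely the one you flag, and the proposed ``$A\leftrightarrow A^*$ symmetry'' fix does not close it. Your recursion
$N_j\ge (c_d a/b)\sum_{m=j}^\ell N_m^{(d-2)/(d-1)}$
is valid only if $N_m\le\tfrac12(n+1)^{d-1}$ for \emph{all} $m\ge j$, and the dual recursion for $N^*_{j'}$ is valid only if $N_k\ge\tfrac12(n+1)^{d-1}$ for all $k\le\ell+1-j'$. If the profile $(N_j)_j$ is monotone these two conditions carve the chimney into a contiguous top part and bottom part and the two bounds indeed combine; but there is no a~priori reason for monotonicity. If $N_j$ dips below $\tfrac12(n+1)^{d-1}$, rises above it, and dips again, then letting $j_*$ be the largest $j$ with $N_{j_*}>\tfrac12(n+1)^{d-1}$ and $k_*'$ the smallest $k$ with $N_{k}<\tfrac12(n+1)^{d-1}$, the $A$-recursion only bounds $\ell-j_*\lesssim n$ and the $A^*$-recursion only bounds $k_*'\lesssim n$, while the middle range $k_*'\le j\le j_*$ is left entirely uncontrolled. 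The paper resolves exactly this issue by a different mechanism: it introduces stopping times $T$ (first top layer where $|A(i)|$ gets close to the full slab) and $\hat T$ (symmetric from below), shows $T,\hat T\lesssim n$ via the isoperimetric induction, and then proves a separate ``intermediate slices'' incompatibility (Lemma~\ref{l.intermediate2}): any slab strictly between the two stopping levels would have both $a|\tilde A(i)|$ and $a|\tilde A(i)^c|$ bounded by $b$ times a small quantity, contradicting $|\tilde A(i)|+|\tilde A(i)^c|=(n+1)^{d-1}$. This gluing lemma is the additional idea you need; without it, the single-scan plan leaves the fat middle unbounded. It is not a matter of tuning constants, so the remark that this is a ``secondary technical point'' is off the mark---this is the crux of the proof.
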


\medskip
\noindent
Fix $H\ge h_0 n$. Write $A=[0,n]^{d-1}\times\{0\}$.
Define for $i\le H-\frac {1} 2 h_0n$
\begin{equation}\label{defXi}
    X_i:=\min \left\{T(E): \begin{array}{c} \text{$E$ cuts $B(A+i \mathbf e_d, \frac 1 2 h_0n)$ from $T(A+i \mathbf e_d,\frac 1 2 h_0n)$} \\  \text{in $\cyl (A+i \mathbf e_d,\frac 1 2 h_0n)$}\\\text{ and $E\cap (B(A+i \mathbf e_d,\frac 1 2 h_0n)\cup T(A+i \mathbf e_d,\frac 1 2 h_0n))\ne \emptyset $}\end{array}\right\}\,.
\end{equation}

\subsubsection*{\bf Penalisation scheme.}
Let $0<\ep<\delta<1/4$. Set $M:=\lfloor n^{\ep}\rfloor$ where $\lfloor x\rfloor$ denotes the largest integer smaller than $x$.
 Let $(Z_i)_{1\le i \le M}$ be a family of i.i.d. random variables that takes the value $-1$ with probability $G(\{a\})$ and $1$ with probability $1-G(\{a\})=G(\{b\})$. The reason for this choice is that to apply Talagrand formula (Theorem \ref{thm:Talagrand}) the $t_e$ and $Z_i$ must be parameterized by a Bernoulli random variable with the same parameter.
Set \[S_M:=\sum_{k=1}^M Z_i.\] 
We define
\[\mathfrak{i}_0:=\left\lfloor\frac{H}{2}\right\rfloor+ S_{M}.\]
In particular $\mathfrak{i}_0$ is a  random integer variable taking value in $[\lfloor H/2 \rfloor- M, \lfloor H/2\rfloor + M]$.
We define the family $(Y_i)_{1\le i \le H}$ as follows
\begin{equation}\label{def:Yi}
    \forall 1\le i \le H \qquad Y_i=Y_i(\mathfrak i _0):=\left\{\begin{array}{ll}0&\mbox{if $ |\mathfrak i_0 -i|\le \frac{H}{2} -n ^{\delta}$}\\
\frac{n^{(d-1)/2}}{n^\delta \log n}\left(|\mathfrak i_0 -i|- \frac{H}{2} +n ^{\delta}\right)&\mbox{otherwise.}
\end{array}\right.
\end{equation}
Let $\mathfrak j_0$ be such that 
\[X_{\mathfrak{j}_0}+Y_{\mathfrak{j}_0}=\min_{1\le i\le H - \frac 1 2 h_0n}X_i+Y_i.\]
If there are several possible choices, we pick the smallest. Let $\cE_{min}(\mathfrak{j}_0)$ be the surface achieving the minimum in the definition of $X_{\mathfrak{j}_0}$. Again if there are several possible choices, we choose in a deterministic way (that is invariant by translation along the $\mathbf e_d$ axis).

\subsubsection*{\bf Edges with large influence.} 
The following proposition will be proved in Section \ref{s.inf}. 
\begin{prop}\label{prop:ubinfluence}There exist $n_0=n_0(G)$ and $\xi>0$ such that for all $n\ge n_0$ \[\left|\left\{e\in \cyl(A,H): \Prb(e\in\cE_{min}(\mathfrak{j}_0))\ge n^{-\xi}\right\}\right|\le n^{d-1-\xi}.\]
\end{prop}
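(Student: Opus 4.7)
The plan is to exhibit a ``bad set'' $\cB\subset\cyl(A,H)$ of cardinality at most $n^{d-1-\xi}$ such that every edge $e\notin\cB$ satisfies $\Prb(e\in\cE_{min}(\mathfrak{j}_0))<n^{-\xi}$. The naive approach, combining the Kesten--Zhang size bound $\E[|\cE_{min}(\mathfrak{j}_0)|]\le Cn^{d-1}$ with Markov's inequality, only yields $|\cB|\le Cn^{d-1+\xi}$, missing the target by a factor of $n^{2\xi}$. Closing this gap is the entire content of the proposition and requires exploiting the microscopic geometry of $\cE_{min}(\mathfrak{j}_0)$, not just its size.

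The key input will be a Peierls-type estimate from \cite{Zhang2017} (extending \cite{Kesten:flows}) on the local structure of minimal cutsets. Heuristically, if $e\in\cE_{min}(\mathfrak{j}_0)$, then any ``reroute'' of the surface avoiding $e$ has cost at least as large as $\cE_{min}(\mathfrak{j}_0)$, and Zhang's machinery controls with exponential decay the probability that such a reroute is forced to be large. Concretely, the goal is to establish an estimate of the form
\[
\Prb\bigl(e\in\cE_{min}(\mathfrak{j}_0),\ \cG_k(e)\bigr)\le e^{-ck},
\]
where $\cG_k(e)$ is a ``local typicality'' event depending only on the capacities $\{t_{e'}\}$ in a $k$-neighborhood of $e$, and $c>0$ is uniform in $e$. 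Choosing $k$ of order $\log n$ then forces $\Prb(e\in\cE_{min}(\mathfrak{j}_0))\le n^{-\xi}+\Prb(\cG_k(e)^{c})$. The complement $\cG_k(e)^{c}$ is a local product event, so Chernoff yields $\Prb(\cG_k(e)^{c})\le e^{-c' k^{\alpha}}$ for some $\alpha>0$; the number of edges for which $\cG_k(e)^{c}$ holds is then at most $(\text{ambient volume})\cdot e^{-c' k^{\alpha}}$. Using Proposition~\ref{lem:insidecyl} to bound the ambient volume of edges that can ever belong to $\cE_{min}(\mathfrak{j}_0)$ by $Cn^d$ and tuning $k$ gives $|\cB|\le n^{d-1-\xi}$ as required.

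The main obstacle is to extract from Zhang's large-deviation arguments a pointwise-in-$e$ estimate of the form displayed above. Zhang's results are phrased globally (as lower large deviations for $\Phi$, or as a tail bound on $|\cE_{min}|$), whereas we need a per-edge version with an explicit local structure governing the exponential decay. This will be the bulk of Section~\ref{s.inf}, and identifying the correct notion of ``local typicality'' $\cG_k(e)$ that simultaneously (i) captures the obstruction preventing a long reroute around $e$ and (ii) is measurable with respect to the capacities in a $k$-box is where the difficulty lies. A secondary point to handle is the randomness of $\mathfrak{j}_0$ coming from the auxiliary variables $(Z_i)$ via $\mathfrak{i}_0$: since the penalisation $Y_i$ affects only a boundary layer of width $n^\delta\ll n$ and since at most $O(n)$ slab positions are candidates for the argmin of $X_i+Y_i$, we expect this to be compatible with the Peierls argument, but uniformity over the realization of $\mathfrak{j}_0$ must be checked.
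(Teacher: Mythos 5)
Your proposal leaves the central step unproved. You acknowledge this yourself (\emph{``The main obstacle is to extract from Zhang's large-deviation arguments a pointwise-in-$e$ estimate \dots This will be the bulk of Section~\ref{s.inf}''}), but the gap is not a deferred technicality: it is essentially the entire content of the proposition, and the route you sketch is unlikely to close it. A per-edge bound of the type $\Prb(e\in\cE_{min}(\mathfrak{j}_0),\,\cG_k(e))\le e^{-ck}$ with a \emph{local} event $\cG_k(e)$ would amount to a pointwise influence bound in the bulk, which is exactly the sort of statement the paper is at pains to avoid (it is a surface analogue of the BKS midpoint problem, which the introduction flags as a separate hard question). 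Nothing in \cite{Zhang2017} is of that form: what the paper actually extracts from Zhang is the subadditivity-of-expectations inequality $\E[\Phi(A',H)]\le\sum_{\mathrm i}\E[\Phi(A_{\mathrm i},H)]+Cn^{d-1}/m^{1/16}$, a statement about expected flows in a tiling of the base, not a Peierls bound localized at an edge. Mischaracterizing Zhang's input as ``Peierls-type'' is where the proposal goes off the rails.

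There is also a structural flaw in the counting even granting the Peierls heuristic. Because $\cG_k(e)$ is required to be measurable with respect to the capacities in a $k$-box around $e$, the quantity $\Prb(\cG_k(e)^{c})$ is a deterministic function of $e$ that, by translation invariance of the i.i.d.\ environment, is essentially the same for every bulk edge. So either it is uniformly small (in which case you have proved a uniform per-edge bound and need no exceptional set at all --- far stronger than the proposition and almost surely out of reach), or it is not small, and your bound gives nothing. The phrase ``the number of edges for which $\cG_k(e)^c$ holds'' conflates the expected cardinality of a random set with the cardinality of the deterministic exceptional set the proposition asks for; these are not the same object. Finally, you do not use, or even mention, the one ingredient that actually makes the paper's proof work: the vertical-shift coupling (Lemma~\ref{lem;diffreg}), which shows $|\Prb(e\in\cE_{min}(\mathfrak{j}_0))-\Prb(e+2\mathbf e_d\in\cE_{min}(\mathfrak{j}_0))|\lesssim n^{-\ep/2}$ and hence that a single high-influence edge forces an entire vertical segment of high-influence edges. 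This, combined with the size bound $|\cE_{min}|\lesssim n^{d-1}$, is what controls the number of ``bad columns''; Zhang's subadditivity then controls the contribution of those columns via $a\E[\sum_{\mathrm i\in\mathrm J}|\cE_{min}\cap\cyl(A_{\mathrm i},H)|]\le \E[\Phi(A,H)]-\sum_{\mathrm i\notin\mathrm J}\E[\Phi(A_{\mathrm i},H)]+n^{(d-1)/2}$. None of that machinery appears in your sketch.
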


\medskip
\noindent
We are  now in position of  proving Theorem \ref{thm:main}.

\begin{proof}[Proof of Theorem \ref{thm:main}] Set $E$ be the set of edges in $\cyl([0,n]^{d-1}\times\{0\}, H) $. Let $\mathrm{I}$ be the set of indices that encode the choice of $\mathfrak{i}_0$, in particular $|\mathrm{I}|=M$.
Set 
\begin{align}\label{e.f}
\f:=\min_{1\le i \le H -\frac 1 2 h_0n }(X_i+Y_i)
\end{align}
where $(X_i)_i$ was defined in \eqref{defXi} and $(Y_i)_i$ in \eqref{def:Yi}.
Thanks to Proposition \ref{lem:insidecyl}, we have 
\[ \Phi([0,n]^{d-1}\times\{0\}, H)=\min_{1\le i \le H -\frac 1 2 h_0n } X_i.\]
It is easy to check that
\[\left|\min_{1\le i\le H -\frac 1 2 h_0n} (X_i+Y_i)-\min_{1\le i\le H -\frac 1 2 h_0n} X_i\right|\le \frac{n^{(d-1)/2}}{\log n}\,.\]
It follows that 
\[\left|\E[\f]-\E[ \Phi([0,n]^{d-1}\times\{0\}, H)]\right|\le \frac{n^{(d-1)/2}}{\log n}\,.\]
and
\begin{equation}\label{eq:varbound}
    \begin{split}
        \Var(\Phi&([0,n]^{d-1}\times\{0\}, H))\\
        &=\E((\Phi([0,n]^{d-1}\times\{0\}, H)-\E \Phi([0,n]^{d-1}\times\{0\}, H))^2)\\&= \E((\Phi([0,n]^{d-1}\times\{0\}, H)-\f +\E \f-\E \Phi([0,n]^{d-1}\times\{0\}, H)+ \f-\E \f)^2)\\&\le 3\left(\Var(\f)+2 \frac{n^{d-1}}{\log n}\right).
    \end{split}
\end{equation}

Let us compute the influence of the bits in $\mathrm{I}$ and $E$.
For $j \in \mathrm{I}$, we have $|\partial_ j  S_M|\le 2$ and it yields that \[|\partial_j \mathfrak i_0|\le 2\qquad\text{and}\qquad    |\partial _j Y_{\mathfrak i_0}|\le \frac{2n^{(d-1)/2}}{n^{\delta}\log  n}.\]
As a result,
\begin{equation*}
    \forall j\in\mathrm{I}\qquad|\partial _j \f|^2\le \frac{4n^{d-1}}{n^{2\delta}\log ^2 n}.
\end{equation*}
Denote $\Delta_e \f=\f\circ\sigma^b_e-\f \circ\sigma^a_e$ where $\sigma^a_e$, $\sigma^b_e$ is the function that changes the value of the edge $e$ to $a$, respectively $b$.
We have
\begin{equation*}
\begin{split}
    \Prb(\partial_e\f\ne0)=\Prb( \Delta_e\f\ne 0)=\frac{1}{G(\{a\})}\Prb( \Delta_e\f\ne 0,t_e=a)&\le \frac{1}{G(\{a\})}\Prb(e\in\cE_{\min}(\mathfrak j_0)).
    \end{split}
\end{equation*}
Note that if $\Delta_e\f\ne 0$ and $t_e=a$, then necessarily $e$ has to belong to the minimal surface.
For $e\in E$, thanks to the previous inequality, we have
\begin{equation*}
    \|\partial_ e\f\|^2_2\le \frac{(b-a)^2}{4}\Prb(\partial_e\f\ne0)\le \frac{(b-a)^2}{4G(\{a\})}\Prb(e\in\cE_{\min}(\mathfrak j_0)).
\end{equation*}
Besides, we have by Cauchy--Schwarz inequality
\begin{equation*}
      \|\partial_ e\f\|_1=\E\left[\left|\partial_e\f\right|\right]\le \sqrt{\Prb(\partial_ e\f\ne 0)}\,\|\partial_ e\f\|_2\le \sqrt{G(\{a\})^{-1}\Prb(e\in\cE_{min}(\mathfrak{j}_0))}\,\|\partial_ e\f\|_2.
\end{equation*}
Let $n_0$ be as in the statement of Proposition \ref{prop:ubinfluence}.
Finally, by applying Theorem \ref{thm:Talagrand} and Proposition \ref{prop:ubinfluence}, we get for $n\ge n_0$
\begin{equation}\label{eq:fin1}
\begin{split}
    & \Var(\f) \\
    & \le C\left(\sum_{j\in \mathrm I}\|\partial_j \f\|_2^2+\sum_{\substack{e\in E:\\\Prb(e\in\cE_{min}(\mathfrak{j}_0))\ge n^{-\xi}}}\|\partial_e\f\|_2^2+\sum_{\substack{e\in E:\\\Prb(e\in\cE_{min}(\mathfrak{j}_0))< n^{-\xi}}}
    \frac{\|\partial_e\f\|_2^2}{1-\log ( G(\{a\})^{-1}\Prb(e\in\cE_{min}(\mathfrak{j}_0))/2}\right)\\
    &\le C\left( |\mathrm{I}|\frac{n^{d-1}}{n^{2\delta}\log ^2 n}+ \frac{(b-a)^2}{G(\{a\})}n^{d-1-\xi}+ \frac{(b-a)^2}{G(\{a\})(1+ \frac\xi 4 \log n)}\sum_{e\in E}\Prb(e\in\cE_{min}(\mathfrak{j}_0)))\right).
\end{split}
\end{equation}
Besides, note that the following set is a cutset from the top to the bottom of the cylinder $\cyl\left(A+\left\lfloor \frac{H}{2}\right\rfloor\mathbf{e}_d, \frac{1}{2}h_0n\right) $
\[\cF:=\left\{\{x,x+\textbf{e}_d\},x \in \left([0,n]^ {d-1}\times\left\{\left\lfloor \frac{H}{2}\right\rfloor\right\}\right)\cap \sZ^ d\right\}.\]
It follows that 
\[\f\le X_{\left\lfloor \frac{H}{2}\right\rfloor}\le b|\cF|=b(n+1)^ {d-1}\]
and
\begin{equation}\label{eq:boundsize}
    a|\cE_{min}(\mathfrak{j}_0)|\le b(n+1)^ {d-1}.
\end{equation}
We conclude by combining inequalities \eqref{eq:varbound}, \eqref{eq:fin1} and \eqref{eq:boundsize}.
\end{proof}

\section{Proof of Proposition \ref{lem:insidecyl} (absence of long chimneys)}\label{s.MS}

We shall need the following discrete isoperimetric inequality
 from \cite{bollobas1991edge} (N.B. the result in \cite{bollobas1991edge} is essentially sharp both in the side-length $n$ and in the dimension $d-1$, we only need the weaker statement given below). 

\begin{thm}[Corollary of Theorem 2 in \cite{bollobas1991edge}]\label{th.Bollo}
For any $d\geq 2$, there exists $c=c(d)>0$ s.t. for any $n\geq 1$ and any set $A\subset [0,n]^{d-1}$, 
\begin{align*}\label{}
|\Delta A| \geq c |A|^{1-\frac 1 {d-1}} \wedge ((n+1)^{d-1}-|A|)^{1-\frac 1 {d-1}}\,,
\end{align*}
where $\Delta A$  stands for the edge boundary of the set $A$ in $[0,n]^{d-1}$ (i.e. $\Delta A:= \big\{ \{i,j\}, \|i-j\|_2=1, i\in A \text{ and } j\in [0,n] ^{d-1} \setminus A \big\}$). 
\end{thm}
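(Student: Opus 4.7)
The plan is to derive the stated inequality as a corollary of the sharp edge-isoperimetric inequality of \cite{bollobas1991edge}, combined with an elementary asymptotic estimate for the boundary of the extremal sets.

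First, I would invoke Theorem~2 of \cite{bollobas1991edge}: for each $m \in \{0,1,\ldots,(n+1)^{d-1}\}$, the minimum of $|\Delta A|$ over subsets $A \subset [0,n]^{d-1}$ of size $m$ is attained by the initial segment $I_m$ in a specific well-ordering on $[0,n]^{d-1}$ (the \emph{cascade} or \emph{simplicial} order, built inductively from the corresponding order in dimension $d-2$ via a lexicographic extension). Consequently, it suffices to prove the lower bound only for the extremizers $A = I_m$.

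Second, I would prove $|\Delta I_m| \geq c(d)\, m^{1-1/(d-1)}$ for every $1 \leq m \leq (n+1)^{d-1}/2$. The initial segments $I_m$ are ``staircase'' sets concentrated in a corner of the box; in particular, $I_m$ is contained in a corner subcube of $[0,n]^{d-1}$ of side length at most $\lceil (2m)^{1/(d-1)} \rceil$, which under the assumption $m \le (n+1)^{d-1}/2$ fits strictly inside $[0,n]^{d-1}$, so its boundary inside $[0,n]^{d-1}$ coincides with its intrinsic boundary and is not artificially reduced by contact with the global boundary of the grid. A direct counting argument on this corner subcube then produces the desired lower bound of order $m^{(d-2)/(d-1)}$.

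Third, the $\wedge$ on the right-hand side reflects the obvious symmetry $\Delta A = \Delta(A^c)$ in the grid $[0,n]^{d-1}$ (edges between $A$ and its complement are counted by both): one simply applies Step~2 to whichever of $A$ and $[0,n]^{d-1}\setminus A$ has size at most $(n+1)^{d-1}/2$.

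The main obstacle is essentially bookkeeping: carefully controlling the cascade structure of $I_m$ and verifying that it indeed lies inside a small corner subcube with comparable boundary. A cleaner alternative that bypasses extremal-set analysis altogether is a Loomis--Whitney approach. Writing $\pi_i(A)$ for the projection of $A$ onto the hyperplane orthogonal to $\mathbf{e}_i$, one has $|A|^{d-2} \le \prod_{i=1}^{d-1} |\pi_i(A)|$, hence by AM--GM
\[
\sum_{i=1}^{d-1} |\pi_i(A)| \;\geq\; (d-1)\, |A|^{(d-2)/(d-1)}.
\]
On the other hand, in each direction $i$, every column whose $n+1$ vertices are not all in $A$ contributes at least one edge to $\Delta A$ in that direction, and at most $|A|/(n+1)$ columns can be fully contained in $A$, so
\[
|\Delta A| \;\geq\; \sum_{i=1}^{d-1} |\pi_i(A)| \;-\; (d-1)\,\frac{|A|}{n+1}.
\]
Combining these two inequalities yields the stated bound in the regime $|A| \leq c_0 (n+1)^{d-1}$ for a small dimension-dependent $c_0$, and the complementary dense regime is handled by passing to $A^c$ exactly as in Step~3.
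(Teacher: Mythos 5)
The paper gives no argument of its own here: the statement is quoted directly as a corollary of Bollob\'as--Leader, so the relevant comparison is with your self-contained projection argument, which is a genuinely different and more elementary route and is indeed all that is needed for this weak form of the inequality. One correction to your Step 1--2, though: for the \emph{edge} boundary in the grid $[0,n]^{d-1}$, Bollob\'as--Leader do not assert that the minimizers are the initial segments of a single (cascade/simplicial) well-order --- that description belongs to vertex isoperimetry in the grid; their edge theorem is an inequality of brick type (a minimum over $r$ of terms like $r\,|A|^{1-1/r}(n+1)^{(d-1-r)/r}$), proved by compressions, and initial segments of the orders involved are certainly not confined to a corner subcube of side $\lceil (2m)^{1/(d-1)}\rceil$. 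This misreading is harmless only because your Loomis--Whitney alternative bypasses extremal sets entirely.

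In that alternative there is, as written, a gap in the middle-density regime when $d\ge 3$: requiring the error term $(d-1)|A|/(n+1)$ to be at most half of $(d-1)|A|^{(d-2)/(d-1)}$ forces $|A|\le 2^{-(d-1)}(n+1)^{d-1}$, so your ``small $c_0$'' is strictly below $1/2$, while passing to $A^c$ only covers $|A|\ge (1-c_0)(n+1)^{d-1}$; densities between $c_0$ and $1-c_0$ are left untreated, and there Step 3 as stated does not apply. The fix is a one-line sharpening of your own computation: if $|A|\le (n+1)^{d-1}/2$ then $|A|/(n+1)=|A|^{(d-2)/(d-1)}\,|A|^{1/(d-1)}/(n+1)\le 2^{-1/(d-1)}|A|^{(d-2)/(d-1)}$, so your two inequalities combine to give $|\Delta A|\ge (d-1)\bigl(1-2^{-1/(d-1)}\bigr)|A|^{1-\frac{1}{d-1}}$ for \emph{all} $|A|$ up to density $1/2$, and for $|A|>(n+1)^{d-1}/2$ one applies this to $A^c$ using $\Delta A=\Delta(A^c)$. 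With that adjustment your argument proves the theorem with the explicit constant $c(d)=(d-1)\bigl(1-2^{-1/(d-1)}\bigr)$, independently of the Bollob\'as--Leader machinery the paper cites.
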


\noindent
{\em Proof of Proposition \ref{lem:insidecyl}.} Let $h_0>0$ whose value will be chosen later depending on $a$ and $b$.  Let $H\ge \frac{1} 2 h_0 n$ and let $E\subset \E^ d$ be a cut-set that achieves the infimum in $\Phi([0,n]^{d-1}\times\{0\}, H)$.
\smallskip

Let $h_{max}$ be the maximum height in $\{0,\ldots, H\}$ of a vertex belonging to an edge in the minimal cut-set $E$. Define similarly $h_{min}$. Our goal is then to show that uniformly in the configuration of capacities $\{t(e)\}$, one necessarily has $h_{max}-h_{min} \leq \tfrac {h_0} 2$. 

\subsubsection*{\bf Scanning the upper horizontal slices.}
We start by  scanning the upper horizontal layers of the cut-set $E$ as follows. 
For any $1\leq i \leq h_{max}$, we call the $i^{th}$ upper layer,  $U_i:=[0,n]^{d-1}\times \{h_{max} - i\}$ and we define the following subset of $U_i$. Let $A(i)\subset U_i$ be the set of all points $x\in U_i$ such that any path $\gamma$ connecting $x$ to $[0,n]^{d-1} \times \{H\}$ inside the cylinder $[0,n]^{d-1} \times [h_{max}-i, H]$ necessarily intersects $E$. 

Let us start with the following two easy observations:
\begin{itemize}
\item Since $E$ is a minimal cut-set, it is easy to check that $A(i)\neq \emptyset$ for all $i\geq 1$. 
\item Notice that the edge boundary $\Delta A(i) \subset E \cap U_i$ (N.B. in general, there is no equality). 
\end{itemize}
We will need the following Lemma.

\begin{lem}\label{l.747}
For each $i\geq 1$, let $ F_i:= E \cap [0,n]^{d-1}\times [0,h_{max}-i]$, i.e. the set of all edges in $E$ that belong to the layer $U_i$ or are below that layer. Then for any $i\geq 1$, the set 
\[
E_i: = F_i \cup  \big\{ \{x,x-\mathbf e_d\}, x \in A(i)\big\}
\]
is a cut-set of the cylinder $[0,n]^{d-1} \times [0,H]$. (N.B. Its dual may no longer correspond to a simply connected surface. See Figure \ref{f.art}).  
\end{lem}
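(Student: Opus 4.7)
The plan is to verify that any nearest-neighbor path $\gamma$ in $\cyl(A,H)$ from $B(A,H)$ to $T(A,H)$ uses at least one edge of $E_i$. First I would track the first moment $\gamma$ enters the upper slab: since $\gamma$ starts at height $0$ and ends at height $H>h_{max}-i$, while consecutive vertices of a nearest-neighbor path differ in height by at most $1$, there is a well-defined first vertex $x$ of $\gamma$ lying in $[0,n]^{d-1}\times[h_{max}-i,H]$. This $x$ must lie at height exactly $h_{max}-i$, so $x\in U_i$, and the vertex of $\gamma$ immediately preceding $x$ is forced to be $x-\mathbf{e}_d$.

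I would then split on whether $x\in A(i)$. If $x\in A(i)$, then by the very definition of the second set in $E_i = F_i\cup\{\{y,y-\mathbf{e}_d\}:y\in A(i)\}$, the edge $\{x-\mathbf{e}_d,x\}$ lies in $E_i$, and this edge is traversed by $\gamma$, so we are done. If $x\notin A(i)$, the definition of $A(i)$ supplies a path $\gamma'$ from $x$ to $[0,n]^{d-1}\times\{H\}$ inside the upper cylinder $[0,n]^{d-1}\times[h_{max}-i,H]$ that avoids every edge of $E$. I then concatenate the initial segment of $\gamma$ (from its starting point in $B(A,H)$ up to $x$) with $\gamma'$ to obtain a path from $B(A,H)$ to $T(A,H)$ inside $\cyl(A,H)$. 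Since $E$ is a cut-set, this concatenated path must contain an edge of $E$; that edge cannot lie on $\gamma'$ by construction, so it must lie in the initial segment of $\gamma$ up to $x$. But every edge of that initial segment has both endpoints at height at most $h_{max}-i$, so any $E$-edge there belongs to $F_i\subset E_i$, which completes the argument.

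The proof is essentially a clean case analysis and I do not anticipate any real obstacle. The one subtle point is the claim that the vertex preceding $x$ must be $x-\mathbf{e}_d$: this uses crucially that nearest-neighbor moves change the height by $0$ or $\pm 1$, combined with the minimality of $x$ as the first vertex of $\gamma$ at height $\geq h_{max}-i$. It is precisely this forced vertical entry that explains why the augmentation of $F_i$ in $E_i$ consists of exactly the downward edges emanating from $A(i)$, rather than, say, horizontal edges within the slice $U_i$.
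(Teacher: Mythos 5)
Your proof is correct and follows essentially the same route as the paper's: track the first entry of the path into the slab at height $h_{max}-i$, note the entering step must be vertical, then split on whether that entry point lies in $A(i)$ (using a downward edge of $E_i$) or not (using the definition of $A(i)$ to build a full top-to-bottom path avoiding $E$ unless the initial segment already hit $F_i$). The only cosmetic difference is that you conclude directly by examining the concatenated path while the paper phrases the second case as a contradiction; both are the same argument.
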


\begin{figure}[!htp]
\begin{center}
\includegraphics[width=0.7\textwidth]{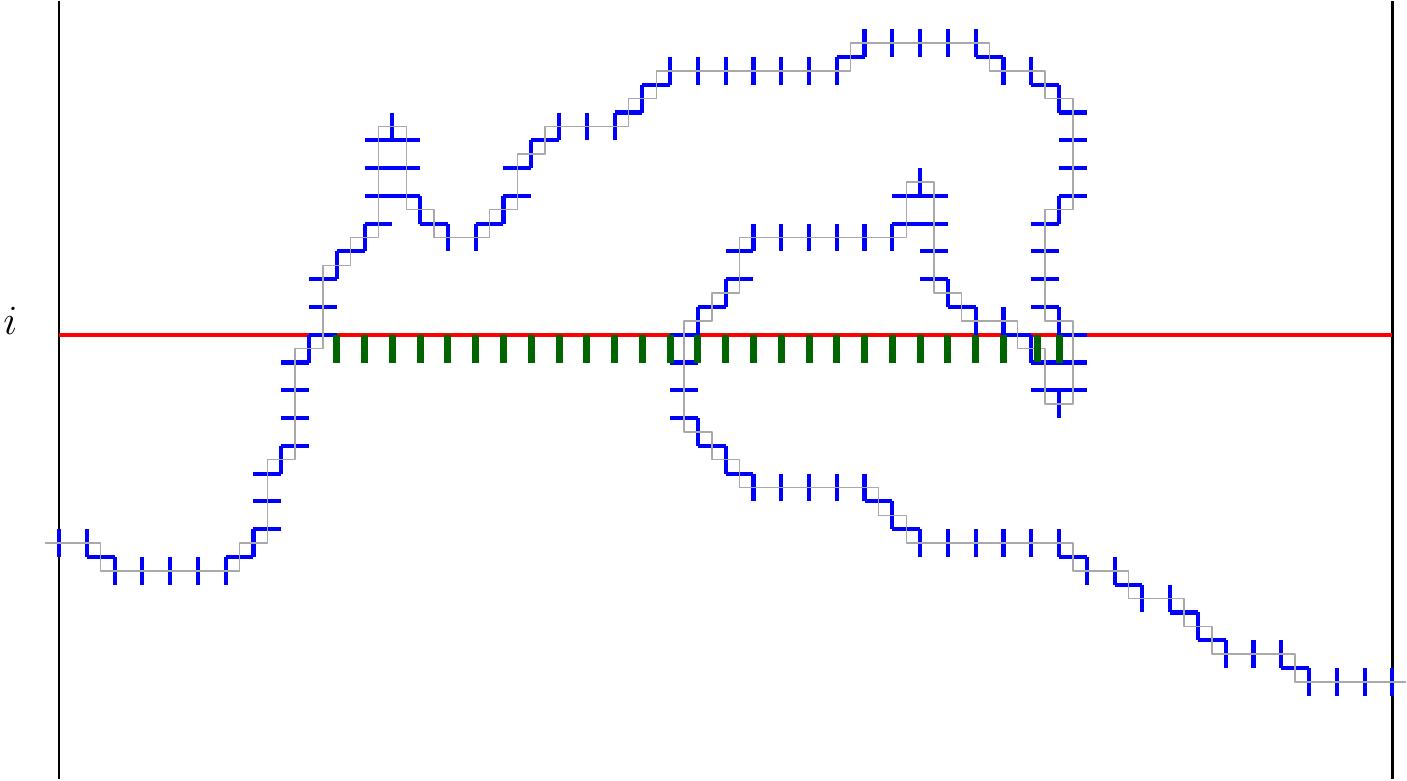}
\end{center}
\caption{Illustration in dimension $d=2(=1+1)$ of the cut-set $E_i$ defined in Lemma \ref{l.747}. It is made here of all the blue edges below level $i$ as well as the additional green edges. By extrapolating such a picture in higher dimension $d\geq 3$, one can easily produce situations where the set $E_i$ splits into distant disconnected parts even though it arises from a minimal cut-set.}\label{f.art}
\end{figure}

\noindent
{\em Proof.} Let $\gamma=\{x_0,x_1,\ldots, x_N\}$ be any connected vertex-path connecting the bottom to the top of the cylinder. Let $1\leq m <N$ be the first time where the path reaches the layer $U_i$, i.e $x_0,\ldots,x_{m-1}$ stays strictly below $L_i$ and $x_m\in U_i$. 
We need to discuss the following two cases: First, if $x_m\in A(i)$, then we are done as the edge $\{x_{m-1}, x_{m}\}$ belongs to $\big\{ \{x,x-\mathbf e_d\}, x \in A(i)\big\}$.  If, on the other hand, the point $x_m\notin A(i)$, then we claim that the path $\{x_0, \ldots, x_m\}$ has necessarily intersected an edge of $F_i$. Indeed, if this was not the case then the path $\{x_0, \ldots, x_m\}$ would arrive at $x_m \notin A(i)$ without ever crossing $E$ and by definition of $A(i)$, one could find a connected continuation of this path $y_1,\ldots, y_M$ such that the path $x_0,\ldots,x_m,y_1,\ldots, y_M$ connects the bottom to the top of the cylinder without ever intersecting the cut-set $E$.  This gives us a contradiction and thus concludes our proof. 
\qed

\smallskip

The reason of this Lemma is that it immediately provides us with the following highly useful constraint: since $E$ is a minimal cut-set and since $F_i \cup \big\{ \{x,x-\mathbf e_d\}, x \in A(i)\big\}$ is a cut-set, we have for all  $i\geq 1$, 
\begin{align}\label{e.const}
a \, |E \setminus F_i| \leq  b | \big\{ \{x,x-\mathbf e_d\}, x \in A(i)\big\}| = b\, |A(i)| 
\end{align}

\medskip
\noindent
We now define
\begin{align}\label{e.stopT}
T:= \min\{ i\geq 1 \text{ s.t. } |A(i)| \geq  (1- \frac {a}{10b}) (n+1)^{d-1}  \}\,.
\end{align}

\smallskip

We shall prove the following Lemma.
\begin{lem}\label{l.tophill}
For any $0<a<b$, there exists $\epsilon=\epsilon(a,b)>0$ s.t.  for any $1\leq k \leq T-1$, 
\begin{align*}
|\Delta  A(k)| \geq \epsilon\, k^{d-2}\,,
\end{align*}

\end{lem}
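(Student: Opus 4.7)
The strategy is to combine the key constraint $f(k) \ge \tfrac{a}{b}\,g(k-1)$ (which follows from \eqref{e.const} together with the observation that the plaquettes $\Delta A(j) \subset E\cap U_j$ live in pairwise disjoint horizontal layers, so that $|E\setminus F_k| \ge \sum_{j=1}^{k-1}|\Delta A(j)|$) with the discrete isoperimetric bound of Theorem~\ref{th.Bollo} applied inside the $(d-1)$-dimensional layer $U_k$, and to iterate both ingredients in $k$. Here I write $f(k):=|A(k)|$, $\delta(k):=|\Delta A(k)|$, $g(k):=\sum_{j=1}^k \delta(j)$, and $\alpha:=(d-2)/(d-1)$, so that $1/(1-\alpha)=d-1$. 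Since $k\le T-1$ forces $(n+1)^{d-1}-f(k)\ge \tfrac{a}{10b}(n+1)^{d-1}$, Theorem~\ref{th.Bollo} gives
\[
\delta(k)\;\ge\; c_d\,\min\!\Bigl(f(k)^\alpha,\ \bigl(\tfrac{a}{10b}\bigr)^\alpha (n+1)^{d-2}\Bigr),
\]
and chaining this with $f(k)\ge \tfrac{a}{b}g(k-1)$ (using monotonicity of $\min$) yields the discrete recursion
\[
g(k)-g(k-1)\;\ge\; c'\min\!\bigl(g(k-1)^\alpha,\ n^{d-2}\bigr)
\]
for some $c'=c'(a,b,d)>0$.

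For the base case, I would argue that $f(1)\ge 1$ by invoking Lemma~\ref{l.747} with $i=1$. Indeed, if $A(1)=\emptyset$ then $E_1=F_1$ would already be a cut-set; but the definition of $h_{max}$ provides at least one edge of $E$ with an endpoint at height $h_{max}$, which by definition does not belong to $F_1$. Since all capacities are at least $a>0$, the set $F_1$ would then be a cut-set of strictly smaller total weight than $E$, contradicting the minimality of $E$. Hence $f(1)\ge 1$, and Theorem~\ref{th.Bollo} (using also $f(1)\le (n+1)^{d-1}-1$, which holds since $1\le T-1$) gives $\delta(1)\ge c_d$.

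I then analyze the recursion in two regimes. In the polynomial regime $g(k-1)\le n^{d-1}$, the recursion is a discrete version of the ODE $g'=c'g^\alpha$, whose solution scales like $t^{1/(1-\alpha)}=t^{d-1}$; starting from $g(1)\ge c_d$ a straightforward induction shows $g(k)\ge c_* k^{d-1}$ for a constant $c_*=c_*(a,b,d)>0$ as long as this regime lasts. In the linear regime $g(k-1)>n^{d-1}$, the increment is at least $c' n^{d-2}$ per step, while $f(k)\le (n+1)^{d-1}$ combined with \eqref{e.const} enforces $g(k-1)\le \tfrac{b}{a}(n+1)^{d-1}$, so this regime can last for at most $O(n)$ steps. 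Consequently $T\le Kn$ for some $K=K(a,b,d)$, and for $1\le k\le T-1$ I conclude as follows. If $k$ lies in the polynomial regime then $\delta(k)\ge c_d f(k)^\alpha \ge c_d(\tfrac{a}{b})^\alpha g(k-1)^\alpha \ge c''(k-1)^{d-2}$, which is $\ge\epsilon k^{d-2}$ for $k\ge 2$ (the case $k=1$ being handled directly by the base step). If $k$ lies in the linear regime then $\delta(k)\ge c''' n^{d-2}\ge (c'''/K^{d-2})k^{d-2}$ since $k\le Kn$. Taking $\epsilon=\epsilon(a,b,d)$ to be the smallest of these constants finishes the proof. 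The main obstacle I anticipate is the base case $f(1)\ge 1$, which genuinely requires the minimality of $E$ via Lemma~\ref{l.747} together with the positivity $a>0$; a secondary subtlety is that $A(k)$ itself carries no natural monotonicity in $k$, whereas the cumulative quantity $g(k)$ does, and it is precisely $g$ that the recursion controls.
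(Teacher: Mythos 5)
Your proposal is correct and rests on exactly the same two ingredients as the paper: the minimality constraint~\eqref{e.const} combined with $|E\setminus F_k|\ge\sum_{j<k}|\Delta A(j)|$, and the Bollob\'as--Leader isoperimetric bound of Theorem~\ref{th.Bollo}, iterated in $k$. The only substantive difference is bookkeeping: you keep the two-term minimum from Theorem~\ref{th.Bollo} (lower-bounding the second branch by $(\tfrac{a}{10b})^{\alpha}(n+1)^{d-2}$) and are then forced into a polynomial/linear regime split on $g(k-1)\lessgtr n^{d-1}$, with the linear regime requiring you to also prove $T=O(n)$ inside the lemma. The paper avoids this detour by observing up front that for $k<T$ the minimum collapses: since $|A(k)|\le(1-\tfrac{a}{10b})(n+1)^{d-1}$, one always has $\min(|A(k)|^\alpha,((n+1)^{d-1}-|A(k)|)^\alpha)\ge c(a,b)|A(k)|^\alpha$, so a single-regime strong induction directly on the statement $|\Delta A(k)|\ge\epsilon k^{d-2}$ suffices; the bound $T\le\bar h_0 n$ is then deduced afterwards, as a corollary, rather than as an input. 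One small imprecision in your write-up: in the polynomial regime you write $\delta(k)\ge c_d f(k)^\alpha$, but your own earlier inequality only yields $\delta(k)\ge c_d\min(f(k)^\alpha,(\tfrac{a}{10b})^\alpha (n+1)^{d-2})$, which is not $c_d f(k)^\alpha$ when $f(k)$ is large; the step is rescued because what you actually use is the weaker (and correct) $\delta(k)\ge c' g(k-1)^\alpha$, but the constant should be $c'$, not $c_d$. Your treatment of the base case $|A(1)|\ge 1$ via Lemma~\ref{l.747} and minimality is a correct unpacking of the paper's remark that $\emptyset\subsetneq A(1)\subsetneq[0,n]^{d-1}$.
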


\noindent
The Lemma is easily proved by induction as follows. Unless $T=1$, the lemma clearly holds for $k=1$. (This is because in this case $\emptyset \subsetneq A(1) \subsetneq [0,n]^{d-1}$).  Now, suppose the Lemma holds for a certain constant $\epsilon>0$ for all $m<k \leq T-1$.

We shall use the above constraint~\eqref{e.const} at the layer $i=k$. Notice that the set of edges $E \setminus F_k$ is by definition the set of edges that are above the layer $k$ (including some vertical edges pointing at that layer). In particular, this set is larger than the set of horizontal edges which lie above the $k^{th}$ layer $U_k$, namely,
\[
E \setminus F_k \supset \bigcup_{m=1}^{k-1} E \cap U_m\,.
\]
Our next crucial point is the fact that for any $m$, as pointed out earlier, one has $\Delta A(m) \subset E \cap U_m$. As such,  this gives us 
\begin{align*}\label{}
|E \setminus F_k |   \geq   \sum_{m=1}^{k-1} |E \cap U_m|  
 &  \geq \sum_{m=1}^{k-1} |\Delta A(m)| \\
& \geq \epsilon \sum_{m=1}^{k-1} m^{d-2} \geq \epsilon \,  C(d) \, k^{d-1}\,. 
\end{align*}
Now plugging this into the constraint~\eqref{e.const} gives us
\begin{align}\label{e.const2}
 b |A(k)|   \geq a  |E \setminus F_k |  \geq  a \epsilon \,  C(d) \, k^{d-1}\,.
\end{align}
Now, using the fact that $|A(k)| < (1- \frac {a}{10b}) (n+1)^{d-1}$ (this is because $k<T$), we obtain from Theorem \ref{th.Bollo} that 
\begin{align*}\label{}
|\Delta A(k)| \geq  c(a,b) |A(k)|^{1- \frac 1 {d-1}}\,.
\end{align*}
(Where for example $c(a,b) = c (\frac a {20b})^{1-\frac 1 {d-1}}$).  Plugging this into~\eqref{e.const2} now gives us 
\begin{align*}\label{}
|\Delta A(k)| \geq c(a,b) \left( \frac {a\epsilon C(d)} b\right)^{1-\frac 1 {d-1}} k^{d-2}\,.
\end{align*}

For $0<a<b$ and the dimension $d$ fixed, one can choose the constant $\epsilon$ small enough so that 
\begin{align*}\label{}
c(a,b) \left( \frac {a\epsilon C(d)} b\right)^{1-\frac 1 {d-1}} > \epsilon\,,
\end{align*}
which ends the proof of the Lemma. \qed 

Now using the Lemma \ref{l.tophill} until $k=T-1$, we extract the following estimate: 
\begin{align*}\label{}
C(d) \epsilon T^{d-1} \leq \sum_{k=1}^{T-1} |\Delta A(k)|  \leq  |E \setminus F_T| \leq  |E| \leq \frac b a (n+1)^{d-1}\,.
\end{align*}
This implies the deterministic statement that the stopping time $T$ is always bounded from above by $\bar h_0\, n$, where $\bar h_0$ is a constant which only depends on $0<a<b$ and the dimension $d$.

\medskip
The rest of the proof will proceed as follows: we will now scan horizontally the cut-set $E$ from its bottom $h_{min}$ and proceed upwards until we reach $h_{min}+T'$.  We will be left with showing that $h_{max}-T$ cannot be much bigger than $h_{min}+T'$.  In order to keep a control on $h_{max}-T$ versus $h_{min}+T'$, it will be important to use exactly the same combinatorial definitions when scanning from below. 

\subsubsection*{\bf Scanning the lower horizontal slices.}
We proceed in the same fashion. 
For any $1\leq i \leq H - h_{min}$, we call the $i^{th}$ lower layer,  $L_i:=[0,n]^{d-1}\times \{h_{min} + i\}$ and we define the following subset of $L_i$. Let $\hat A(i)\subset L_i$ be the set of all points $x\in L_i$ such that any path $\gamma$ connecting $x$ to $[0,n]^{d-1} \times \{H\}$ inside the cylinder $[0,n]^{d-1} \times [h_{max}-i, H]$ necessarily intersects $E$.  (Notice and this is a key point that the set $\hat A(i)$ is nothing but the previous set $A(j)$ with $j=h_{max}-h_{min}-i$).

We will need the following slight adaptation of Lemma \ref{l.747} where we now add additional edges on the top of \textbf{the complement of} $\hat A(i)$. 
\begin{lem}\label{l.748}
For each $i\geq 1$, let $G_i:= E \cap [0,n]^{d-1}\times [h_{min}+i, H]$, i.e. the set of all edges in $E$ that belong to the layer $L_i$ or are above that layer. Then for any $i\geq 1$, the set 
\[
\hat E_i: = G_i \cup  \big\{ \{x,x+\mathbf e_d\}, x \notin \hat A(i)\big\}
\]
is a cut-set of the cylinder $[0,n]^{d-1} \times [0,H]$. 
\end{lem}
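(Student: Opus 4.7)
The plan is to mirror the argument of Lemma \ref{l.747}, scanning $E$ from below rather than from above, with "first entry of the path into the slice $U_i$" replaced by "last exit of the path from the slice $L_i$." The one point that requires a moment's thought is why the added edges are parameterized by $x\notin\hat A(i)$ rather than by $x\in\hat A(i)$, which would look more symmetric with Lemma \ref{l.747}. I would first record the following consistency fact that justifies the choice: if $x\notin\hat A(i)$, then there exists an upward path from $x$ to the top avoiding $E$; were there also a downward path from $x$ to the bottom avoiding $E$, their concatenation would produce a bottom-to-top path disjoint from $E$, contradicting that $E$ is a cut-set. Hence every $x\notin\hat A(i)$ has the property that every downward path from $x$ to the bottom must cross $E$, which is precisely the condition under which it is useful to add the upward edge above such an $x$.

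I would then fix an arbitrary vertex-path $\gamma=\{x_0,\ldots,x_N\}$ from the bottom to the top of the cylinder. Since $\gamma$ starts at height $0<h_{\min}+i$ and ends at height $H>h_{\min}+i$, the set $\{k\colon x_k\in L_i\}$ is non-empty; let $m$ denote its \emph{largest} element. By maximality, every $x_k$ with $k>m$ lies strictly above $L_i$, and in particular $x_{m+1}=x_m+\mathbf e_d$. I would then split into two cases. If $x_m\notin\hat A(i)$, the vertical edge $\{x_m,x_m+\mathbf e_d\}=\{x_m,x_{m+1}\}$ belongs to the added collection $\{\{x,x+\mathbf e_d\}\colon x\notin\hat A(i)\}$, so $\gamma$ crosses $\hat E_i$ at step $m+1$. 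If on the contrary $x_m\in\hat A(i)$, then the suffix $\{x_m,x_{m+1},\ldots,x_N\}$ is a path from $x_m$ to $[0,n]^{d-1}\times\{H\}$ lying entirely in $[0,n]^{d-1}\times[h_{\min}+i,H]$; by definition of $\hat A(i)$ it must cross $E$, and any such edge has both endpoints at height $\geq h_{\min}+i$, hence lies in $G_i$. Either way $\gamma$ meets $\hat E_i$, so $\hat E_i$ is a cut-set.

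I do not expect any real obstacle in carrying this out: the isoperimetric input that made Lemma \ref{l.747} genuinely useful is already packaged in Lemma \ref{l.tophill}, and Lemma \ref{l.748} is a combinatorial twin of Lemma \ref{l.747} tailored precisely so that the identification $\hat A(i)=A(h_{\max}-h_{\min}-i)$ will let the same bounds on $|\Delta A(k)|$ be reused at symmetric heights without reproving any downward isoperimetric analog from scratch.
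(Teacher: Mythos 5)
Your proof is correct and follows the paper's argument essentially verbatim: take the last passage index $m$ of the path through $L_i$, note by maximality that $x_{m+1}=x_m+\mathbf e_d$, and split into the two cases $x_m\in\hat A(i)$ (the suffix of the path stays at height $\ge h_{\min}+i$ and so must cross $G_i$) and $x_m\notin\hat A(i)$ (the vertical edge $\{x_m,x_{m+1}\}$ is one of the added edges). Your preliminary ``consistency fact'' motivating the choice $x\notin\hat A(i)$ is a pleasant remark but is not actually used in the argument, and the rest is a slightly more spelled-out rendering of the same steps as in the paper.
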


\noindent
{\em Proof.} 
Let $\gamma=\{x_0,x_1,\ldots, x_N\}$ be any connected vertex-path connecting the bottom to the top of the cylinder. Let $1 \leq m <N$ be the last passage time of this path through the layer $L_i$. If $x_m \in \hat A(i)$, then by definition of this set, the rest of the connected path $\{x_m,\ldots, x_N\}$ will go through an edge in $G_i$. If on the other hand $x_m \notin \hat A(i)$, then since $x_m$ is the last passage through $L_i$, the next edge is necessarily a vertical edge $\{x_m,x_m\mathbf +  e_d\}$ which  belongs to $\big\{ \{x,x+\mathbf e_d\}, x \notin \hat A(i)\big\}$, this ends the proof.
\qed

Similarly as for the upper layers, we define
\begin{align}\label{e.stopT2}
\hat T:= \min\{ i\geq 1 \text{ s.t. } |(\hat A(i))^c| \geq (1 -\frac{a}{10b}) (n+1)^{d-1}   \}\,.
\end{align}
We claim that the exact same analysis as for the upper layers shows the following two facts:
\begin{enumerate}
\item  for any $1\leq k \leq \hat T-1$, $|\Delta  \hat A(k)| = |\Delta (\hat A(k)^c|  \geq \epsilon\, k^{d-2}$. 
\item $\hat T  \leq \bar h_0 n$. 
\end{enumerate}

\noindent
To conclude our proof, it remains to show that the upper layer where we stop the scanning from above, i.e. $h_{max}- T$ cannot be much higher then the lower layer $h_{min}+\hat T$ at which we stop the scanning from below. In fact, with our choices of stopping times $T$ and $\hat T$, we will show more in the next Lemma, i.e. that up to a safety margin of 1, the top exploration necessarily stops below the bottom exploration. 

\begin{lem}\label{l.intermediate}
\[
h_{min} + \hat T +1   \geq   h_{max} - T\,.
\]
\end{lem}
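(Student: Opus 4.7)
The plan is to argue by contradiction. Suppose $h_{\min}+\hat T+1 < h_{\max}-T$, and set $h^{top}:=h_{\max}-T$, $h^{bot}:=h_{\min}+\hat T$, so that $h^{top}\geq h^{bot}+2$ (i.e.\ at least one full intermediate layer separates the two explorations). The strategy is to exhibit two cheap alternative cut-sets, one from Lemma~\ref{l.747} and one from Lemma~\ref{l.748}, which combined with minimality of $E$ will force $|E|$ to be strictly smaller than the trivial lower bound $(n+1)^{d-1}$ coming from the $(n+1)^{d-1}$ straight vertical columns of the cylinder.

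The key algebraic input is the identification $\hat A(i)=A(H'-i)$ with $H':=h_{\max}-h_{\min}$, already emphasised in the paper. I would use it to apply the top-scanning construction of Lemma~\ref{l.747} at the deep index $k^*:=H'-\hat T$ (whose layer sits at height $h^{bot}$), and the bottom-scanning construction of Lemma~\ref{l.748} at the shallow index $i^*:=H'-T$ (whose layer sits at height $h^{top}$). The stopping rule for $\hat T$ gives
\[|A(k^*)|=|\hat A(\hat T)|\leq \tfrac{a}{10b}(n+1)^{d-1},\]
so the cut-set $E_{k^*}:=F_{k^*}\cup\{\{x,x-\mathbf e_d\}: x\in A(k^*)\}$ adds to $F_{k^*}$ at most $\tfrac{a}{10b}(n+1)^{d-1}$ vertical edges. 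Symmetrically, the stopping rule for $T$ gives $|\hat A(i^*)^c|=|A(T)^c|\leq \tfrac{a}{10b}(n+1)^{d-1}$, so $\hat E_{i^*}:=G_{i^*}\cup\{\{x,x+\mathbf e_d\}: x\notin\hat A(i^*)\}$ adds to $G_{i^*}$ at most that many vertical edges.

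Minimality of $E$ gives $T(E)\leq T(E_{k^*})$ and $T(E)\leq T(\hat E_{i^*})$, and the exact same manipulation as in~\eqref{e.const} (each added vertical edge costs at most $b$, each edge of $E$ costs at least $a$) yields
\[|R_{k^*}|\leq \tfrac{1}{10}(n+1)^{d-1}\qquad\text{and}\qquad|\hat R_{i^*}|\leq \tfrac{1}{10}(n+1)^{d-1},\]
where $R_{k^*}:=E\setminus F_{k^*}$ is the set of edges of $E$ with some endpoint at height $\geq h^{bot}+1$, and $\hat R_{i^*}:=E\setminus G_{i^*}$ is the set of edges of $E$ with some endpoint at height $\leq h^{top}-1$.

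The punchline is purely combinatorial: under the contradiction assumption $h^{top}-1\geq h^{bot}+1$, every edge of $E$ must lie in $R_{k^*}\cup\hat R_{i^*}$, because an edge not in $R_{k^*}$ has both endpoints at height $\leq h^{bot}\leq h^{top}-2$, and hence has an endpoint at height $\leq h^{top}-1$, placing it in $\hat R_{i^*}$. Consequently $|E|\leq \tfrac{1}{5}(n+1)^{d-1}$, which contradicts the deterministic lower bound $|E|\geq(n+1)^{d-1}$ obtained by noting that each straight vertical column $\{u\}\times\{0,\ldots,H\}$ is a bottom-to-top path and must therefore contain at least one edge of the cut-set $E$. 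No individual step should be difficult; the one conceptual trick is recognising that the identification $\hat A(i)=A(H'-i)$ is precisely what makes it possible to produce a small vertical-gap set on \emph{both} ends of the strip simultaneously (the two stopping rules use opposite sides of the "big/small" dichotomy for the same family of sets).
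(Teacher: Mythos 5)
Your proof is correct, and it is a genuine (if modest) streamlining of the paper's argument. The paper also works at the two stopping levels $h_{\max}-T$ and $h_{\min}+\hat T$ and feeds the stopping rules into the minimality constraint~\eqref{e.const}, but it then routes through the intermediate-layer sets $\tilde A(i)$: it proves Lemma~\ref{l.intermediate2}, which says each $\tilde A(i)$ satisfies both $a|\tilde A(i)| \le b|\hat A(\hat T)|$ and $a|\tilde A(i)^c| \le b|A(T)^c|$, and derives the contradiction from $|\tilde A(i)|+|\tilde A(i)^c|=(n+1)^{d-1}$. You bypass the intermediate layers entirely: you bound $|E\setminus F_{k^*}|$ and $|E\setminus G_{i^*}|$ directly from the same constraint, note that $F_{k^*}\cap G_{i^*}=\emptyset$ once the two stopping heights are separated (so that the two defect sets cover $E$), and compare with the trivial bound $|E|\ge(n+1)^{d-1}$ from the $(n+1)^{d-1}$ disjoint vertical columns. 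The two arguments use the same ingredients and yield the same numeric margin $\frac{1}{5}(n+1)^{d-1} < (n+1)^{d-1}$, so neither is stronger, but yours avoids introducing Lemma~\ref{l.intermediate2} and its verification for each intermediate $i$; in fact your combinatorial step only needs $h^{top}\ge h^{bot}+1$, so it would even give the slightly sharper conclusion $h_{\min}+\hat T \ge h_{\max}-T$. One small presentational point: when invoking Lemma~\ref{l.747} at $k^*=H'-\hat T$ and Lemma~\ref{l.748} at $i^*=H'-T$, you should note (as you implicitly do via the contradiction hypothesis) that both indices satisfy $k^*,i^*\ge 1$, so the lemmas apply.
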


To prove this Lemma, now that we have analyzed upper and lower horizontal slices, it remains to understand what would happen for the intermediate slices if they were to exist.

\subsubsection*{\bf Scanning the intermediate slices.}

Let us suppose by contradiction that $h_{min}+\hat T + 1 < h_{max}- T$. Introduce \[
M:= h_{max}-T -h_{min} - \hat T \,\,\, (M \geq 2), 
\]
the number of intermediate slices. Let us reparametrize the layers so that 
$i=0$ corresponds to the height $h_{min}+\hat T$ while $i=M$ corresponds to the top intermediate layer $h_{max}-T$. We shall denote by $\{ \tilde A(i) \}_{1\leq i \leq M-1}$ the same sets as before (we use $\tilde A$ instead of $A$ or $\hat A$ just because of the reparametrization). Note that we have  $\tilde A (0) = \hat A(\hat T)$ and $\tilde A(M) = A(T)$. 
\begin{lem}\label{l.intermediate2}
For each $1\leq i \leq M-1$, we have the following 2 constraints.
\begin{enumerate}
\item $a |\tilde A(i)^c| \leq b |A(T)^c| \,\, (\leq \frac a {10} \frac {(n+1)^{d-1}} 2$) 
\item $a | \tilde A(i)|  \leq b |\hat A (\hat T)|  \,\, (\leq  \frac a {10}  \frac {(n+1)^{d-1}} 2$)
\end{enumerate}
\end{lem}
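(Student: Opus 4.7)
The plan is to deduce both constraints from the minimality of $E$ by constructing cut-sets of the full cylinder $[0,n]^{d-1}\times[0,H]$ and comparing their capacities to $T(E)$. The construction will be hybrid, using walls at two different horizontal layers together with suitable portions of $E$, in the spirit of the constructions in Lemmas \ref{l.747} and \ref{l.748}.

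First I would establish a ``mirror'' of Lemma \ref{l.747} at level $T$: the set $(E\setminus F_T)\cup\{\{z,z-\mathbf{e}_d\}:z\in U_T\setminus A(T)\}$ is itself a cut-set, because any bottom-to-top path has a first passage $z$ through $U_T$; the wall blocks if $z\notin A(T)$, while for $z\in A(T)$ the definition of $A(T)$ forces the continuation to cross $E\setminus F_T$. Minimality of $E$ then yields the auxiliary bound $a|F_T|\le b|A(T)^c|$. For constraint (1) I would then consider
\[
C \;=\; \{\{x,x+\mathbf{e}_d\}:x\in L_{\hat T+i}\setminus\tilde A(i)\}\cup(E\setminus F_T)\cup\{\{z,z-\mathbf{e}_d\}:z\in U_T\setminus A(T)\},
\]
which is a cut-set by a last-passage/first-passage argument combining those of Lemma \ref{l.748} and the mirror above: for any bottom-to-top path with last passage $y$ through $L_{\hat T+i}$ and first subsequent passage $z$ through $U_T$, the three cases $y\in\tilde A(i)^c$, $(y\in\tilde A(i),\,z\in A(T)^c)$, and $(y\in\tilde A(i),\,z\in A(T))$ are blocked respectively by the wall above $y$, the wall below $z$, and (by the defining property of $A(T)$) an edge of $E\setminus F_T$ above $z$.

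Comparing the capacity $b|\tilde A(i)^c|+T(E\setminus F_T)+b|A(T)^c|$ of $C$ to $T(E)=T(F_T)+T(E\setminus F_T)$ via minimality produces a combined inequality $a|F_T|\le b(|\tilde A(i)^c|+|A(T)^c|)$. To extract the single-term bound $a|\tilde A(i)^c|\le b|A(T)^c|$, I would pair this with a complementary cut-set that swaps the blocked/non-blocked wall sets at the intermediate layer, producing a second inequality whose subtraction with the first cancels the unwanted mixed term; alternatively, one could refine $C$ so that the $L_{\hat T+i}$-walls are used in isolation from the $U_T$-walls. Constraint (2) then follows by an analogous hybrid construction at the bottom of the intermediate slab, combining walls above $\tilde A(i)$ at $L_{\hat T+i}$ with walls above $\hat A(\hat T)^c$ at $L_{\hat T}$ and using $(E\setminus G_{\hat T})$ below (since the direct mirror of Lemma \ref{l.748} fails, the $E$-block must sit on the lower side of the lower wall). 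The hard part is the final algebraic extraction step: while all the cut-set verifications are routine last-passage arguments, isolating a single one of $|\tilde A(i)|$, $|\tilde A(i)^c|$, $|A(T)^c|$, $|\hat A(\hat T)|$ on each side of the target inequality -- rather than a sum -- requires carefully pairing two related cut-sets, unlike Lemmas \ref{l.747} and \ref{l.748} which reduce to a single direct comparison.
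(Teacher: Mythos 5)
Your cut-set constructions are reasonable in spirit, but the proposal stalls exactly at the point you yourself flag as ``the hard part''. Comparing $T(E)$ to the hybrid cut-set $C$ (with walls at both $L_{\hat T+i}$ and $U_T$) can only produce an inequality whose right-hand side is a \emph{sum} of wall sizes, e.g.\ $a|F_T|\le b(|\tilde A(i)^c|+|A(T)^c|)$. Subtracting two such inequalities is not a valid move (they are inequalities, not identities), and ``refining $C$ so the two walls are used in isolation'' is not carried out. No concrete mechanism is given for isolating $a|\tilde A(i)^c|\le b|A(T)^c|$ from a combined bound, so the proof is incomplete. (A secondary issue: your ``mirror of Lemma~\ref{l.747}'' places the wall below the first passage through $U_T$ and then appeals to $E\setminus F_T$ for the $z\in A(T)$ case, but the continuation from $z$ could cross $E$ only along horizontal edges at level $h_{\max}-T$, which belong to $F_T$, not $E\setminus F_T$; the cut-set verification as stated has a gap.)

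The paper sidesteps the extraction problem entirely by not building a second cut-set. For item (1) it uses a direct counting argument: for each $x\in(\tilde A(i))^c$ there is, by definition, a path from $x$ upward to the top avoiding $E$; concatenating with the vertical ray from $x$ down to the bottom, the cut-set property forces at least one edge of $E$ on that downward ray. Distinct $x$'s give disjoint rays, hence at least $|(\tilde A(i))^c|$ edges of $E$ strictly below layer $U_T$. Then a \emph{single} application of the already-proved Lemma~\ref{l.748} at layer $h_{\max}-T$ and minimality of $E$ give $a\,|E\setminus G_{h_{\max}-T-h_{\min}}|\le b\,|A(T)^c|$, from which $a|(\tilde A(i))^c|\le b|A(T)^c|$ follows immediately. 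Item (2) is the symmetric argument using upward vertical rays and Lemma~\ref{l.747}. This counting step is the idea your proposal is missing; with it, no pairing of cut-sets is needed.
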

For the inequalities in the parenthesis, we used the definitions of our stopping times $T$ and $\hat T$ (given in~\eqref{e.stopT} and~\eqref{e.stopT2}). Conditions 1) and 2) are incompatible. Therefore this lemma implies that such intermediate layers cannot exist. This implies Lemma \ref{l.intermediate}. To conclude the proof of Proposition \ref{lem:insidecyl}, we are thus left with proving Lemma  \ref{l.intermediate2}.

\smallskip
\noindent
{\em Proof of Lemma  \ref{l.intermediate2}.}
Let us start with item 1. Each point $x$ in the intermediate layer $i$ (i.e. at height $h_{min}+\hat T +i$) which belongs to the set $(\tilde A(i))^c$ has a path in its upper cylinder which connects it to $[0,n]^{d-1}\times \{H\}$ without intersecting $E$. By concatenating this path together with a vertical path pointing down all the way from $x$ to the bottom face $[0,n]^{d-1}\times \{0\}$, since $E$ is a cut-set, it is necessary that at least one edges in this vertical path belongs to $E$. This implies in particular that we have at least $|(\tilde A(i))^c|$ edges of $E$ which are located below layer $i$. Finally, there cannot be too many such edges since $E$ is a minimal cut-set. Using Lemma \ref{l.748} for the layer at height $h_{max}-T$ (or $i=M$), leads us precisely to the constraint 1).

Item 2 is proved in a similar way. For any point $x$ which belongs to $\tilde A(i)$, if we follow the vertical path above $x$ until we reach the top layer $[0,n]^{d-1}\times \{T\}$, then by definition of $\tilde A(i)$, the path will go through at least one edge of $E$. This implies in particular that there are at least $|\tilde A(i)|$ edges in $E$ above (or touching) layer $i$.  Now using Lemma \ref{l.747} for the layer at height $h_{min}+\hat T$ (or $i=0$) together with the fact that $E$ is minimal leads us to constraint 2.  \qed

\begin{rk}\label{}
In the context of minimal surfaces in the continuum setting, a similar phenomenon of absence of ``long thin chimneys" has  been observed for example in  \cite{david1998quasiminimal}.
\end{rk}

\section{Proof of Proposition \ref{prop:ubinfluence}}\label{s.inf}
Let us first prove the following proposition which states that it is unlikely that the minimal surface $\cE_{min}(\mathfrak{j}_0)$ sticks to the bottom or the top of the cylinder.
\begin{prop}\label{prop:notX_1}There exists $n_0=n_0(G)\ge 1$ such that for all $n\ge n_0$,
we have
\[\Prb\left(\mathfrak{j}_0\in\{1,2\}\right)\le \frac{2}{\sqrt n}\,\]
and
\[\Prb\left(\mathfrak{j}_0\in\left\{H-\frac{1}{2}h_0n , H-\frac{1}{2}h_0n -1\right\}\right)\le \frac{2}{\sqrt n}.\]
\end{prop}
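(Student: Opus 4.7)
The plan is to reduce $\{\mathfrak{j}_0 \in \{1,2\}\}$ to a structural event on the $X_i$ (which forces the argmin of $X_i$ at $i=1$ or $i=2$ together with a large ``gap'' versus the $X_i$ in the safe zone) and then to bound the probability of this event by combining independence of distant sub-cylinders with the randomness in $\mathfrak{i}_0$.

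First, since $\mathfrak{i}_0 = \lfloor H/2 \rfloor + S_M$ with $|S_M| \le M = n^{\epsilon} \ll n^{\delta} \ll \tfrac12 h_0 n$, the index $1$ is strictly farther from $\mathfrak{i}_0$ than any other index in the valid range $[1, H - \tfrac12 h_0 n]$, so $Y_1 = \max_{i} Y_i \asymp n^{(d-1)/2}/\log n$, while $Y_i = 0$ for every $i$ in the ``safe zone'' $I_{\text{safe}} := \{i : Y_i = 0\}$, which is an interval of length $\asymp H$. Under $\mathfrak{j}_0 = 1$, for any $i^* \in \arg\min_i X_i$ the inequality $X_1 + Y_1 \le X_{i^*} + Y_{i^*}$ together with $Y_{i^*} \le Y_1$ forces $X_1 \le \Phi$; combined with $X_1 \ge \Phi$ this gives $X_1 = \Phi$. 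The same inequality applied to $i \in I_{\text{safe}}$ yields the gap $X_i \ge X_1 + Y_1$ for every $i \in I_{\text{safe}}$. An identical pinching handles $\mathfrak{j}_0 = 2$.

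To bound the resulting event, I select a maximal family $j_1 < \cdots < j_K$ of indices in $I_{\text{safe}}$ spaced by more than $\tfrac12 h_0 n$ and with each associated sub-cylinder disjoint from those defining $X_1$ and $X_2$. By translation-invariance of the i.i.d.\ capacities, $X_1, X_{j_1}, \ldots, X_{j_K}$ are i.i.d., so exchangeability gives $\Prb(X_1 \le X_{j_k}\ \forall k) = 1/(K+1)$, which dominates $\Prb(X_{j_k} \ge X_1 + Y_1\ \forall k)$ since $Y_1 \ge 0$. Since $K \ge \lfloor 2H/(h_0 n) \rfloor - O(1)$, this already yields $\Prb(\mathfrak{j}_0 \in \{1,2\}) \le 2/\sqrt{n}$ as soon as $H \ge h_0 n^{3/2}/2$. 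For the uniform bound over all $H \ge h_0 n$, I would refine the argument by exploiting the randomness of $\mathfrak{i}_0$: conditional on the capacities, the set $\{\mathfrak{i}_0 : \mathfrak{j}_0 \in \{1,2\}\}$ is contained in a deterministic interval (by monotonicity of $\mathfrak{j}_0$ in $\mathfrak{i}_0$ coming from the fact that $Y_1$ is strictly increasing in $\mathfrak{i}_0$), and the local limit theorem $\Prb(\mathfrak{i}_0 = k) \le C/\sqrt{M}$ combined with the symmetry estimate above, a careful choice of $\epsilon, \delta$ and a union bound over capacity events yields the claim. The statement for $\mathfrak{j}_0 \in \{H - \tfrac12 h_0 n, H - \tfrac12 h_0 n - 1\}$ is proved by an analogous (in fact easier) argument since $Y_i = 0$ at these indices; here the pinching degenerates to the direct statement that the argmin of $X_i$ over the safe zone is attained at one of these two endpoints.

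\textbf{Main obstacle.} The main difficulty is the regime $H \asymp n$, where only $O(1)$ disjoint witness sub-cylinders fit inside $I_{\text{safe}}$ and the bare symmetry argument yields only an $O(1)$ bound. Achieving the uniform $2/\sqrt n$ requires carefully balancing independence via disjoint sub-cylinders with a local-limit-theorem estimate on the random walk $S_M$ governing $\mathfrak{i}_0$.
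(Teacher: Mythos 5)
Your proposal has a genuine gap at precisely the point you flag as the ``main obstacle,'' and the proposed fix does not work as stated, for the following concrete reason. The exchangeability/disjoint-sub-cylinders argument requires the sub-cylinders defining $X_1, X_{j_1},\dots,X_{j_K}$ to be pairwise disjoint, which forces $K=O(H/(h_0 n))=O(1)$ in the critical regime $H\asymp h_0 n$, so the symmetry bound is $O(1)$ rather than $O(n^{-1/2})$. Your suggested refinement via a local limit theorem on $\mathfrak{i}_0$ cannot close this: since $M=\lfloor n^\epsilon\rfloor$ with $\epsilon<1$, the local CLT gives $\Prb(\mathfrak{i}_0=k)\le C\,n^{-\epsilon/2}$, which is already \emph{larger} than $2/\sqrt n$, so even confining the bad set of $\mathfrak{i}_0$ to a single point would not suffice. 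Moreover the monotonicity of $\mathfrak{j}_0$ in $\mathfrak{i}_0$ is asserted, not proved, and is not obviously true because the entire penalty profile $(Y_i)_i$ shifts with $\mathfrak{i}_0$, not just $Y_1$.

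The crucial ingredient you are discarding is the penalty gap itself. You correctly extract that $\mathfrak{j}_0=1$ forces $X_i\ge X_1+Y_1$ for all $i$ in the safe zone with $Y_1\gtrsim n^{(d-1)/2}/\log n$, but you then weaken this to $X_i\ge X_1$ before estimating the probability (``dominates \dots since $Y_1\ge0$''). The paper's proof keeps the gap and argues by contradiction: assuming $\Prb(\mathfrak{j}_0=1)\ge n^{-1/2}$, translation invariance implies the ``gap event'' $\mathcal E_j := \{X_j\le \min_{i\in[j+2n^\delta,3H/4]}X_i - n^{(d-1)/2}/(2\log n)\}$ has probability $\ge n^{-1/2}$ for \emph{every} $j$. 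Summing over $\asymp H/n^\delta$ well-spaced indices yields $\E[\mathrm N]\gtrsim n^{1/2-\delta}\ge n^{1/4}$ such gap events in expectation, and \emph{chaining} them (each new gap contributes another $n^{(d-1)/2}/(2\log n)$ to the deficit) forces $\min_{i\le H/2}X_i\le \min_{H/2<i\le 3H/4}X_i - \mathrm N\cdot n^{(d-1)/2}/(2\log n)$. By Chebyshev together with the Efron--Stein bound $\Var(\min_{i\le H/2}X_i)=O(n^{d-1})$, one gets $\E[\mathrm N]=O(\log^2 n)$, a contradiction. Note this argument never needs the $X_i$ to be independent across $i$---only identically distributed under vertical translation---which is exactly what rescues the regime $H\asymp n$ where your disjointness-based symmetry bound degenerates.
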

To prove this proposition, we will need the following upper bound on the variance.
\begin{prop}[Efron--Stein]\label{prop:Efronstein}There exists a constant $\beta>0$ depending on $G$ such that for all $n\ge 1$ and $H\ge 1$, we have
\[\Var (\Phi([0,n]^{d-1}\times\{0\}, H))\le \beta n^{d-1}\,.\]
\end{prop}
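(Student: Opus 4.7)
The plan is to apply the Efron--Stein inequality (Theorem~\ref{thm:efronstein}) directly to $\Phi=\Phi([0,n]^{d-1}\times\{0\},H)$ viewed as a function of the i.i.d.\ capacities $\{t_e\}_{e\in\cyl(A,H)}$. Write $\omega$ for the original capacity configuration and $\omega^{(e)}$ for the configuration in which $t_e$ has been replaced by an independent copy $t'_e$. Fix a deterministic (measurable) selection rule picking one minimal cut-set $\cE_{min}(\omega)$ for each $\omega$.

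The first key observation is the one-sided sensitivity of $\Phi$ to an edge flip. Since $\Phi(\omega)$ is nondecreasing in each $t_e$, raising the capacity is the only way to increase the minimum cut, so the negative part $(\Phi(\omega)-\Phi(\omega^{(e)}))_-$ is nonzero only when $t'_e>t_e$. Moreover, if $\Phi(\omega^{(e)})>\Phi(\omega)$, then $e$ must lie in every minimal cut-set of $\omega^{(e)}$: otherwise $\cE_{min}(\omega^{(e)})$ would have the same capacity under $\omega$, contradicting the strict increase. Since the values are bounded in $\{a,b\}$, the difference is at most $b-a$. Combining these two facts yields the deterministic bound
\[
\bigl(\Phi(\omega)-\Phi(\omega^{(e)})\bigr)_-^{\,2}\ \le\ (b-a)^2\,\ind\{e\in \cE_{min}(\omega^{(e)})\}.
\]

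Taking expectations and using that $\omega^{(e)}$ has the same law as $\omega$, the Efron--Stein inequality gives
\[
\Var(\Phi)\ \le\ \sum_{e\in\cyl(A,H)}\E\bigl[(\Phi(\omega)-\Phi(\omega^{(e)}))_-^{\,2}\bigr]\ \le\ (b-a)^2\,\E\bigl[|\cE_{min}|\bigr].
\]
It then remains to bound $\E[|\cE_{min}|]$ linearly in $n^{d-1}$. This follows from the deterministic inequality $a\,|\cE_{min}|\le T(\cE_{min})=\Phi$ together with the easy upper bound on $\Phi$ obtained from the flat horizontal cut $\cF=\{\{x,x+\mathbf e_d\}:x\in([0,n]^{d-1}\times\{\lfloor H/2\rfloor\})\cap\sZ^d\}$, which satisfies $\Phi\le b|\cF|\le b(n+1)^{d-1}$ (exactly as used in the proof of Theorem~\ref{thm:main}). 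This yields $\E[|\cE_{min}|]\le (b/a)(n+1)^{d-1}$ and the proposition with $\beta=\beta(a,b,d,G)=2^{d-1}(b-a)^2 b/a$ say.

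There is no real obstacle here; the only subtle point is the one-sided argument that identifies the ``bad'' event $\{\Phi(\omega^{(e)})>\Phi(\omega)\}$ with membership of $e$ in a minimal cut-set of the \emph{resampled} configuration (so that one can use exchangeability to move back to $\omega$). The crucial structural input is the same as in the proof of Theorem~\ref{thm:main}: even though the cylinder has on the order of $n^{d-1}H$ edges, the ``effective'' number of influential edges is only $O(n^{d-1})$, because a minimal cut-set has at most $\Phi/a=O(n^{d-1})$ edges.
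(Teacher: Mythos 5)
Your overall route — Efron--Stein, then reduce to bounding $\E[|\cE_{min}|]$ via the flat horizontal cut — is the same as the paper's, and the final bound is correct. However, the key deterministic step is wrong as stated. You claim that if $\Phi(\omega^{(e)})>\Phi(\omega)$ then $e$ lies in \emph{every} minimal cut-set of the \emph{resampled} configuration $\omega^{(e)}$, arguing that otherwise $\cE_{min}(\omega^{(e)})$ would have the same capacity under $\omega$, ``contradicting the strict increase.'' But this yields no contradiction: if $e\notin\cE_{min}(\omega^{(e)})$, then $T_\omega(\cE_{min}(\omega^{(e)}))=T_{\omega^{(e)}}(\cE_{min}(\omega^{(e)}))=\Phi(\omega^{(e)})$, which only gives $\Phi(\omega)\le\Phi(\omega^{(e)})$, perfectly consistent with a strict increase. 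The claim is in fact false in general: raising $t_e$ from $a$ to $b$ can create a \emph{new} minimal cut-set avoiding $e$ that merely ties the old one (two competing cuts whose capacities become equal after the flip), and the selection rule may pick the one without $e$. So the bound $(\Phi(\omega)-\Phi(\omega^{(e)}))_-^2\le(b-a)^2\,\ind\{e\in\cE_{min}(\omega^{(e)})\}$ does not hold.

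The correct observation, which is what the paper uses, points at the \emph{original} configuration: if $e\notin\cE_{min}(\omega)$, then $T_{\omega^{(e)}}(\cE_{min}(\omega))=T_\omega(\cE_{min}(\omega))=\Phi(\omega)$, and since $\cE_{min}(\omega)$ is still a cut-set one gets $\Phi(\omega^{(e)})\le\Phi(\omega)$, contradicting the strict increase. Hence $(\Phi(\omega)-\Phi(\omega^{(e)}))_-^2\le(b-a)^2\,\ind\{e\in\cE_{min}(\omega)\}$, giving $\Var(\Phi)\le(b-a)^2\,\E[|\cE_{min}|]$ directly, with no need for the exchangeability step you emphasise. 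With that fix, the remainder of your argument (the deterministic bound $a\,|\cE_{min}|\le\Phi\le b(n+1)^{d-1}$ from the flat cut) matches the paper's proof.
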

\begin{proof}The proof is a straightforward application of Theorem \ref{thm:efronstein}.
 Let $e_1,\dots,e_N$ be a deterministic ordering of the edges of the cylinder $\cyl([0,n]^{d-1}\times\{0\}, H))$. Set $X=(t_{e_1},\dots,t_{e_n})$ and $f(X)=\Phi([0,n]^{d-1}\times\{0\}, H)$. Let $\cE_{min}$ be a minimal surface for $X$ (chosen according to a deterministic rule in case of ties). Recall that $X^{(i)}$ denotes the vector $X$ where the $i$-th edge has been resampled. Note that if $f(X)<f(X^{(i)})$ then $e_i$ belongs $\cE_{min}$.
 By similar reasoning as in \eqref{eq:boundsize}, we have 
 \[|\cE_{min}|\le \frac b a (n+1)^{d-1}.\]
By applying Theorem \ref{thm:efronstein}, it follows that
 \[\Var(f(X))\le \sum_{i=1}^N (b-a)^2\Prb(e_i\in \cE_{min})\le (b-a)^2\frac b a (n+1)^{d-1}.\]
 This concludes the proof.
\end{proof}

\begin{proof}[Proof of Proposition \ref{prop:notX_1}]
Thanks to Proposition \ref{lem:insidecyl}, we have
\[\Phi([0,n]^{d-1}\times\{0\}, H)=\min_{1\le i \le H- \frac{h_0}2 n}X_i\,.\]
We will just prove the first inequality as the proof for the second inequality is similar.
Let us assume by contradiction that 
\[\Prb\left(\mathfrak{j}_0=1\right)=\Prb\left(\min_{1\le i\le H-\frac{1}{2}h_0n} X_i+Y_i=X_1+ Y_1\right)\ge \frac{1}{\sqrt n}\,.\]
We have for $n$ large enough
\[|\mathfrak i_0 -1|\ge \frac{H}{2}-n^{\ep}-1>\frac{H}{2}-n^{\delta}+ \frac{n^{\delta}}{2}\qquad\text{and}\qquad Y_1\ge \frac{n^{(d-1)/2}}{2\log n}.\]
For all $i\in [2 n ^\delta, 3H/4]$, we have $Y_i=0$.
On the event $\{\min_{1\le i\le H-\frac{1}{2}h_0n} X_i+Y_i=X_1+ Y_1\}$, we have
\[X_1\le \min_{i\in [2 n ^\delta, 3H/4]}X_i- \frac{n^{(d-1)/2}}{2\log n}\,.\]
Hence,
\[\Prb\left(X_1\le \min_{i\in [2 n ^\delta, 3H/4]}X_i- \frac{n^{(d-1)/2}}{2\log n}\right)\ge \frac{1}{\sqrt n}\,.\]
Set 
\[\cE_j:=\left\{ X_j\le \min_{i\in [j+2 n ^\delta, 3H/4]}X_i- \frac{n^{(d-1)/2}}{2\log n}\right\}\,.\]
Since the distribution of  $(X_i)_{1\le i \le 3H/4}$ is the same as the distribution of $(X_i)_{j\le i \le 3H/4+j-1}$, we have
\[\Prb(\cE_j)\ge \frac{1}{\sqrt n}\,.\]
Set for $1\le k \le H/4n^\delta$
\[I_k:= [4kn^\delta, 2(2k+1) n^\delta ]\qquad\text{and}\qquad\cF_k:=\bigcup_{j\in I_k}\cE_j\,.\]
Let $\mathrm N$ be the number of $k\le H/4n^\delta$ such that $\cF_k$ occurs, that is,
\[\mathrm N:=\sum_{1\le k\le H/4n^\delta}\ind_{\cF_k}\,.\]
We have
\begin{equation}\label{eq:espN}
    \E[\mathrm N]\ge \sum_{1\le k\le H/4n^\delta}\Prb(\cF_k)\ge \frac{h_0\sqrt{n}}{4n^\delta}\ge \frac{h_0}{2}n ^{1/4}
\end{equation}
where we recall that $H\ge h_0n$.
Let $i_1<\dots<i_{\rN}$ be integers such that they all belong to different intervals in $(I_k,1\le k\le H/4n^\delta)$ and for all $1\le j\le \rN$, the event $\cE_{i_j}$ occurs.
Note that $i_{j+1}-i_j\ge 2n^\delta$ since they belong to different intervals. Moreover, on the event $\cE_{i_j}$, we have
\[X_{i_j}\le X_{i_{j+1}} - \frac{n^{(d-1)/2}}{2\log n}\,.\]
We can prove by induction that for $0\le k \le \rN-1$
\[X_{i_{\rN-k}}\le \min_{H/2+1\le i\le 3H/4}X_i -(k+1)\frac{n^{(d-1)/2}}{2\log n}.\]
Hence,
\[\min_{1\le i \le H/2}X_i\le \min_{H/2+1\le i \le 3H/4}X_i -\rN\frac{n^{(d-1)/2}}{2\log n}\,.\]
It follows that for $t\ge 0$ using Bienaym\'e--Chebyshev's inequality and Proposition \ref{prop:Efronstein}
\begin{equation*}
    \begin{split}
        \Prb(\rN \ge 2t \log ^2 n)&\le \Prb \left(\min_{H/2+1\le i \le 3H/4}X_i- \min_{1\le i \le H/2}X_i\ge  t n^{(d-1)/2}\right)\\
        &\le 2\frac{\Var( \min_{1\le i \le H/2}X_i)}{t^2 n^{d-1}}\le \frac{2\beta }{t^2}\,.
    \end{split}
\end{equation*}
It yields that 
\[\E(\rN)\le 2(1+2\beta)\log ^2 n\,.\]
This contradicts inequality \eqref{eq:espN} for $n$ large enough depending on $G$.
By the same reasoning we can prove that
\[\Prb\left(\mathfrak{j}_0=2\right)\le \frac{1}{\sqrt n}.\]
This completes the proof.
\end{proof}

To prove Proposition \ref{prop:ubinfluence}, we will also need the following lemma on the regularity of influences under translation by $\mathbf{e}_d$.
\begin{lem}\label{lem;diffreg} There exists $n_0=n_0(G)$ such that for all $n\ge n_0$, $H\ge h_0 n$ the following holds. Let $e$ be an edge of $\cyl(A,H)$ such that $e+2\mathbf e_d \subset \cyl(A,H)$, we have
\[|\Prb(e\in \cE_{min}(\mathfrak{j}_0))-\Prb(e+ 2\mathbf e_d\in \cE_{min}(\mathfrak{j}_0))|\le \frac{2}{n^{\ep/2}} \,.\]
\end{lem}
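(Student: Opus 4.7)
The strategy is to exhibit an \emph{approximate vertical shift invariance} of the law of $\cE_{min}(\mathfrak{j}_0)$ through a coupling argument. I will build two copies $(t,\mathfrak{i}_0)$ and $(\tilde t,\tilde{\mathfrak{i}}_0)$ of the input data, both with the original joint distribution, under which with probability $1-O(n^{-\ep/2})$ one has $\tilde t_{e'}=t_{e'-2\mathbf e_d}$ for every edge $e'\subset[0,n]^{d-1}\times[2,H]$ together with $\tilde{\mathfrak{i}}_0=\mathfrak{i}_0+2$. On such a coupling the minimal surface for $(\tilde t,\tilde{\mathfrak{i}}_0)$ will be the exact $2\mathbf e_d$-translate of the minimal surface for $(t,\mathfrak{i}_0)$, which will immediately imply the claim.

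The coupling is constructed in two independent parts. For the capacities, I set $\tilde t_{e'}:=t_{e'-2\mathbf e_d}$ on every edge $e'\subset[0,n]^{d-1}\times[2,H]$ and sample $\tilde t_{e'}$ as fresh independent $G$-distributed variables on the bottom two layers of $\cyl(A,H)$, giving $\tilde t\stackrel{d}{=}t$ automatically. For the penalization center, I use that $\mathfrak{i}_0=\lfloor H/2\rfloor+S_M$ with $S_M$ a biased random walk with $\pm1$ increments and $M=\lfloor n^\ep\rfloor$ steps. Since the increments are $\pm1$, both $S_M$ and $S_M+2$ live on the same parity sublattice, so the local CLT (or a direct computation on binomial probabilities) gives $\|\mathrm{law}(S_M)-\mathrm{law}(S_M+2)\|_{\mathrm{TV}}=O(1/\sqrt M)=O(n^{-\ep/2})$, and a maximal coupling produces $\tilde{\mathfrak{i}}_0\stackrel{d}{=}\mathfrak{i}_0$ with $\Prb(\tilde{\mathfrak{i}}_0\ne\mathfrak{i}_0+2)\le Cn^{-\ep/2}$. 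This is precisely why the statement involves a shift by $2\mathbf e_d$ and not by $\mathbf e_d$: a unit shift would flip the parity of $S_M$ and its total variation distance to $S_M+1$ would not decay with $M$.

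Next I introduce the good event
\[\cG:=\{\tilde{\mathfrak{i}}_0=\mathfrak{i}_0+2\}\cap\{\mathfrak{j}_0(\mathfrak{i}_0;t)\le H-\tfrac12 h_0n-2\}\cap\{\mathfrak{j}_0(\tilde{\mathfrak{i}}_0;\tilde t)\ge 3\}.\]
By the preceding step and by Proposition~\ref{prop:notX_1} applied to both $(t,\mathfrak{i}_0)$ and $(\tilde t,\tilde{\mathfrak{i}}_0)$, together with $1/\sqrt n=o(n^{-\ep/2})$ since $\ep<1/4$, one gets $\Prb(\cG^c)\le 2/n^{\ep/2}$ for $n$ large enough. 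On $\cG$, the crucial shift identity holds: for every $i\ge 3$ the sub-cylinder $[0,n]^{d-1}\times[i,i+\tfrac12 h_0n]$ lies inside $[0,n]^{d-1}\times[2,H]$, the region where $\tilde t$ is the exact $2\mathbf e_d$-shift of $t$, so $X_i(\tilde t)=X_{i-2}(t)$; moreover, by~\eqref{def:Yi} one has $Y_i(\mathfrak{i}_0+2)=Y_{i-2}(\mathfrak{i}_0)$. A short argmin comparison of $\tilde f(i):=X_i(\tilde t)+Y_i(\tilde{\mathfrak{i}}_0)$ with $f(i):=X_i(t)+Y_i(\mathfrak{i}_0)$, using the two boundary conditions built into $\cG$ and the translation-invariant tie-breaking rule for $\cE_{min}$, then forces $\mathfrak{j}_0(\tilde{\mathfrak{i}}_0;\tilde t)=\mathfrak{j}_0(\mathfrak{i}_0;t)+2$, and consequently $\cE_{min}(\mathfrak{j}_0(\tilde{\mathfrak{i}}_0;\tilde t);\tilde t)$ equals the $2\mathbf e_d$-translate of $\cE_{min}(\mathfrak{j}_0(\mathfrak{i}_0;t);t)$.

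On $\cG$, the events $\{e\in\cE_{min}(\mathfrak{j}_0)\text{ under }(t,\mathfrak{i}_0)\}$ and $\{e+2\mathbf e_d\in\cE_{min}(\mathfrak{j}_0)\text{ under }(\tilde t,\tilde{\mathfrak{i}}_0)\}$ thus coincide; since $(\tilde t,\tilde{\mathfrak{i}}_0)\stackrel{d}{=}(t,\mathfrak{i}_0)$, the second probability equals $\Prb(e+2\mathbf e_d\in\cE_{min}(\mathfrak{j}_0))$, and the difference between the two probabilities is controlled by $\Prb(\cG^c)\le 2/n^{\ep/2}$. The main obstacle is to control both boundary contributions in the argmin step: one must rule out simultaneously that the shifted minimizer lands in the freshly-resampled bottom indices $\{1,2\}$ and that the original minimizer sits within $2$ of the top of the allowed range (in which case the $+2$ shift would exit $\{1,\dots,H-\tfrac12 h_0n\}$). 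Both are ruled out precisely by Proposition~\ref{prop:notX_1}, which is why that geometric control of $\mathfrak{j}_0$ had to be established beforehand.
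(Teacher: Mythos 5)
Your proposal is correct and follows essentially the same plan as the paper's proof: a two-part coupling (shifted capacities on the bulk, fresh capacities on a two-layer slab, plus a coupling of the penalization centers), the good event built from Proposition~\ref{prop:notX_1}, and the argmin-shift identity. The one genuine point of departure is in coupling $\mathfrak{i}_0$ with $\mathfrak{i}_0+2$: you go through a total-variation estimate (via the local CLT on the same parity sublattice, which you rightly flag as the reason the statement is stated for a shift by $2\mathbf{e}_d$ and not $\mathbf{e}_d$) and then invoke a maximal coupling, whereas the paper constructs an explicit coupling by running a second independent walk $S'$ and switching to it at the stopping time $\tau=\inf\{k: S'_k\ge S_k+2\}$, bounding $\Prb(\tau=\infty)$ by the persistence probability of a mean-zero walk. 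Both yield the same $O(M^{-1/2})=O(n^{-\ep/2})$ bound; yours is a slightly more textbook route, the paper's is more hands-on but equivalent.
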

\begin{proof}[Proof of Lemma \ref{lem;diffreg}]
Let $(t_e)_{e\in \cyl(A,h)}$. We define $t'_e$ as follows
\[t'_e:=\left\{\begin{array}{ll}
t_{e+2\textbf{e}_d}&\mbox{if $e+2\textbf{e}_d\in\cyl(A,H)$}\\
t''_e&\mbox{otherwise}\end{array}\right.\]
where $(t''_e)_{e\in \cyl(A,h)}$ is independent from $(t_e)$.
Let $(Z_i)_{1\le i \le M}$, $(Z'_i)_{1\le i \le M}$ be two independent family of random variables that take the value $-1$ with probability $G(\{a\})$ and $1$ with probability $1-G(\{a\})=G(\{b\})$.
Set \[S_k:=\sum_{k=1}^k Z_i\quad\text{and}\quad S'_k:=\sum_{k=1}^k Z'_i.\] 
Let 
\[\tau:=\inf\{k\in\{1,\dots,M\}:S'_k\ge S_k+2\}\]
where we use the convention $\inf \emptyset=+\infty$.
Finally, we set
\[\mathfrak{i}_0:=\sum_{k=1}^ MZ_k\qquad \text{and}\qquad \mathfrak{i}'_0:=\sum_{k=1}^ {\min(\tau,M)}Z'_k+\sum_{k=\min(\tau,M)+1}^ MZ_k.\]
Denote by $\cE'_{min}(\mathfrak{j}'_0)$ the minimal cutset corresponding to the family $(t'_e)_{e\in\cyl(A,h)}$ and $\mathfrak{i}'_0$. It is easy to check that it has the same law as $\cE_{min}(\mathfrak{j}_0)$.
Moreover, there exists a universal $C>0$ s.t.
\[\Prb(\mathfrak{i}'_0-\mathfrak{i}_0\ne 2)=\Prb(\tau=\infty)=\Prb(\forall k\in\{1,\dots,M\}\quad S_k-S'_k\ge 0)\le \frac{C}{\sqrt{M}}.\]
On the event $\{\mathfrak{i}'_0=\mathfrak{i}_0+2\}\cap \{\mathfrak j_0\notin \{H-\frac{1}{2}h_0n , H-\frac{1}{2}h_0n -1\}\}\cap \{\mathfrak j'_0 \notin \{1,2\}\}$, we have
\[\forall 1\le j \le  H-\frac{1}{2}h_0n -2\qquad  X_{j}(t_e)+Y_j(\mathfrak i _0)=X_{j+2}(t'_e)+Y_{j+2}(\mathfrak i '_0)\]
and  $\cE_{min}(\mathfrak{j}_0)+2\mathbf{e}_d= \cE'_{min}(\mathfrak{j}_0')$. It yields
\begin{equation*}
    \begin{split}
        |\Prb(e\in \cE_{min}&(\mathfrak{j}_0))-\Prb(e+ 2\mathbf e_d\in \cE_{min}(\mathfrak{j}_0))|\\&\le \Prb(\mathfrak{i}'_0-\mathfrak{i}_0\ne 2)+\Prb(\mathfrak{j}_0\in\{1,2\})+\Prb\left(\mathfrak{j}_0\in\left\{H-\frac{1}{2}h_0n , H-\frac{1}{2}h_0n -1\right\}\right).
    \end{split}
\end{equation*}
Finally, by combining the two previous inequalities and using Proposition \ref{prop:notX_1}, it follows that for $n\ge n_0$ (where $n_0$ is as in the statement of Proposition \ref{prop:notX_1})
\begin{equation*}
    |\Prb(e\in \cE_{min}(\mathfrak{j}_0))-\Prb(e+ 2\mathbf {e}_d\in \cE_{min}(\mathfrak{j}_0))|\le \frac{2}{n^{\ep/2}}
\end{equation*}
The result follows.
\end{proof}

\begin{proof}[Proof of Proposition \ref{prop:ubinfluence}]Let $n_0$ be as in the statement of Lemma \ref{lem;diffreg}. Let $n\ge n_0$.
Let $m\ge 1$ that we will choose later depending on $n$. Set $k=\lfloor n/m\rfloor$.
For $\mathrm i=(i_1,\dots,i_{d-1})\in\{1,\dots,k\}^{d-1}$, we define \[A_{\mathrm i}:=\prod_{j=1}^d[(i_j-1)m,i_jm)\times\{0\}\,.\]
We denote by $\mathrm J$ the set of cylinders that contain an edge such that $\Prb(e\in \cE_{min}(\mathfrak{j}_0))\ge n^{-\ep/8}$, that is,
\[\mathrm J:=\left\{{\mathrm i\in\{1,\dots,k\}^{d-1}}:\exists e\in \cyl(A_{\mathrm i},H)\quad \Prb(e\in \cE_{min}(\mathfrak{j}_0))\ge n^{-\ep/8}\right\}.\]
Note that the set $\mathrm J$ is deterministic. By definition, the edges $e\in\cyl(A_i,H)$ for $i\notin\mathrm{ J}$ have a small influence. We need to make sure that there is a negligible number of edges with a large influence in $\cyl(A_i,H)$ for $i\in\mathrm J$. In particular, we need to avoid that the minimal surface has a too large intersection with these cylinders.

Let us first bound the size of $\mathrm J$. Let us assume that there exists $ e\in \cyl(A_{\mathrm i},H)$ such that $\Prb(e\in \cE_{min}(\mathfrak{j}_0))\ge n^{-\ep/8}$. Without loss of generality assume that $e+\sqrt n\mathbf e_d\in\cyl(A_{\mathrm i},H)$. By Proposition \ref{lem;diffreg}, we have
\[|\Prb(e\in \cE_{min}(\mathfrak{j}_0))-\Prb(e+2j\mathbf e_d\in \cE_{min}(\mathfrak{j}_0))|\le \frac{2j}{n^{\ep/2}}\,.\]
Hence, for every $j\le n^{\ep/4}/4$, we have
\[\Prb(e+2j\mathbf e_d\in \cE_{min}(\mathfrak{j}_0))\ge \frac{1}{n^{\ep/8}}-\frac{2j}{n^{\ep/2}}\ge \frac{1}{2n^{\ep/8}} \,.\]
It yields that
\begin{equation*}
    \E[|\cE_{min}(\mathfrak{j}_0)\cap\cyl(A_{\mathrm i},H)|]\ge \frac{n^{\ep/4}}{8n^{\ep/8}}\ge \frac 1 8 n^{\ep/8}\,.
\end{equation*}
Hence, we get using inequality \eqref{eq:boundsize}
\[|\mathrm J| \frac{n^{\ep/8}} 8 \le\sum_{\mathrm i\in \mathrm J}\E[|\cE_{min}(\mathfrak{j}_0)\cap\cyl(A_{\mathrm i},H)|]\le \E[|\cE_{min}(\mathfrak{j}_0)\cap\cyl(A,H)|]\le \frac{b}{a} (n+1)^{d-1},\]
it follows that for some positive constant $\beta$ depending on $a,b$ and $d$
\[|\mathrm J|\le \beta n^{d-1-\ep/8}.\]
Next, we aim at upper bounding the total influence of edges in $\cyl(A_i,H)$ for $i\in\mathrm J$, that is $\E\left[ |\cE_{min}(\mathfrak{j}_0)\cap\cup_{\mathrm i\in \mathrm J}\cyl(A_{\mathrm i},H)|\right] $. 

Let $\cE$ be a cutset in the cylinder $\cyl(A,h)$, one can check that $\cE\cap \cyl(A_{\mathrm i},H)$ is also a cutset from the top to the bottom for the cylinder $\cyl(A_{\mathrm i},H)$.
It follows that
\[\Phi(A_{\mathrm i},H)\le T(\cE\cap \cyl(A_{\mathrm i},H)).\]
Hence, it yields
\[\sum_{\mathrm i\in\{1,\dots,k\}^{d-1}\setminus \mathrm J}\Phi(A_{\mathrm i },H) + a \sum_{\mathrm i\in \mathrm J}|\cE_{min}(\mathfrak{j}_0)\cap\cyl(A_{\mathrm i},H)| \le T(\cE_{min}(\mathfrak{j}_0))\le \Phi(A,H)+ n^{(d-1)/2}.\]
Taking the expectation, we get
\begin{equation}\label{eq:prop61}
    a \E\left[\sum_{\mathrm i\in \mathrm J}|\cE_{min}(\mathfrak{j}_0)\cap\cyl(A_{\mathrm i},H)|\right] \le \E[\Phi(A,H)]-  \sum_{\mathrm i\in\{1,\dots,k\}^{d-1}\setminus \mathrm J}\E[\Phi(A_{\mathrm i },H)]+  n^{(d-1)/2} \,.
\end{equation}
To control the right hand side, we will need a result of Zhang \cite{Zhang2017}.

Let $K= \lceil n/(m-\lfloor m^{5/6}\rfloor)\rceil$.
Set $A':=[0,K(m-\lfloor m^{5/6}\rfloor)]^{d-1}\times \{0\}$ where $K$ was chosen in such a way that $A\subset A'$.
Thanks to the fine study of Zhang \cite[inequality (10.22)]{Zhang2017}, there exists $C>0$ such that we have

\begin{equation}\label{e.Z}
\E[\Phi(A',H)]\le\sum_{\mathrm i\in\{1,\dots,K\}^{d-1}}\E[\Phi(A_{\mathrm i },H)]+C\frac{n^{d-1}}{m^{1/16}}.
\end{equation}

Let us briefly explain how to prove this inequality. Let us assume we could prescribe in each cylinder $\cyl(A_i,H)$ a boundary condition for the minimal surface (that is the trace of the surface on the lateral side) in such a way that these boundary conditions match for adjacent cylinders. In other words, by taking the union of all minimal cutsets in $\cyl(A_i,H)$, $i\in\{1,\dots,k\}^{d-1}$, one would get a cutset in the big cylinder $\cyl(A,H)$ and so $\Phi(A,H)]\le \sum\Phi(A_{\mathrm i },H)$. The issue with this strategy is as follows: in order to prescribe a boundary condition without affecting too much the expectation $\E[\Phi(A_{\mathrm i },H)]$, one needs that the trace of the minimal cutset on the lateral sides is negligible with $n^ {d-1}$.  Since this fact is not known, Zhang overpasses this issue by slightly reducing the dimensions of the cylinder's basis (it accounts for the $m-\lfloor m^{5/6}\rfloor)$): since the total size of the minimal surface is of order $m^ {d-1}$, we can find a smaller cylinder where the trace of the minimal surface on the lateral sides is negligible. Once we can prescribe a given boundary condition, we use the symmetry to prescribe to adjacent cylinders some symmetric matching boundary conditions. The union of all these cutsets form a cutset in the big cylinder. Since the cylinders with prescribed boundary conditions are smaller than the original ones, we need to use a larger $K\ge k$ to be sure that $A\subset A'$.

Let us now explain how we can control the right hand side of~\eqref{eq:prop61} using~\eqref{e.Z} from \cite[inequality (10.22)]{Zhang2017}. The notation $\tau_{min}(k_1,\dots,k_{d-1},m)$ corresponds to $\Phi(\prod_{i=1\dots d}[0,k_i]\times\{0\},m)$. We apply the inequality with $k_1=\dots=k_{d-1}=m$, $w_1=\dots=w_{d-1}=K$, $\delta=1/2$. With these notations, the left hand side in (10.22) is equal to $\E[\Phi(A',H)]$.
Since $A\subset A'$, we have
$\E[\Phi(A,H)]\le \E[\Phi(A',H)]$.
It follows that 
\begin{equation}\label{eq:prop62}
    \begin{split}
        \E[\Phi(A,H)]-  &\sum_{\mathrm i\in\{1,\dots,k\}^{d-1}\setminus \mathrm J}\E[\Phi(A_{\mathrm i },H)]\\
        &\le \E [\Phi(A,H)]-\sum_{\mathrm i\in\{1,\dots,K\}^{d-1}}\E[\Phi(A_{\mathrm i },H)]+ (|\mathrm J|+(K-k)^{d-1}) bm^{d-1}\\
        & \le C\frac{n^{d-1}}{m^{1/16}}+b\beta n^{d-1-\ep/8}m^{d-1}+ b\frac{n^{d-1}}{m^{(d-1)/6}}.
    \end{split}
\end{equation}
Finally, combining \eqref{eq:prop61} and \eqref{eq:prop62}, we get
\begin{equation*}
\begin{split}
    a\,\E\left[ \sum_{\mathrm i\in \mathrm J}|\cE_{min}(\mathfrak{j}_0)\cap\cyl(A_{\mathrm i},H)|\right]
    &\le  \frac{n^{d-1}}{m^{1/16}}+b\beta n^{d-1-\ep/8}m^{d-1}+ n^{(d-1)/2}.
    \end{split}
\end{equation*}
Now choose $m=n^{\ep/(16(d-1))}$. There exists $\xi\le \ep/16$ depending on $\ep$ such that for $n$ large enough
\[\E\left[ \sum_{\mathrm i\in \mathrm J}|\cE_{min}(\mathfrak{j}_0)\cap\cyl(A_{\mathrm i},H)|\right]\le n^{d-1-\xi}.\]
We conclude that
\[\left|\left\{e\in \bigcup_{\mathrm i\in \mathrm J}\cyl(A_{\mathrm i},H): \Prb(e\in\cE_{min}(\mathfrak{j}_0))\ge n^{-\xi/2}\right\}\right|\le n^{d-1-\xi/2}.\]
Since $\xi\le \ep/16$, we have by definition of $\mathrm J$ 
\[\left|\left\{e\in \cyl(A,H): \Prb(e\in\cE_{min}(\mathfrak{j}_0))\ge n^{-\xi/2}\right\}\right|\le n^{d-1-\xi/2}\]
(indeed, in the remaining cylinders, all edges have influence less than $n^{-\ep/8}$ which is smaller than $n^{-\xi/2}$). As such, the result follows.
\end{proof}
\section{Proof of Theorem \ref{thm:main2} and fluctuations of anchored surfaces}\label{s.thm2}

We start with the proof of Theorem \ref{thm:main2} which relies on the martingale decomposition method from Newman--Piza \cite{newman1995divergence}.

\smallskip
\noindent
{\em Proof of Theorem \ref{thm:main2}.}

Let $e_1,\dots,e_N$ be a deterministic ordering of the edges of the cylinder $\cyl([0,n]^{d-1}\times\{0\}, H))$. Denote by $\cF_k$ the $\sigma$-algebra generated by $t_{e_1},\dots,t_{e_k}$.
To simplify the notations, denote $f(t_{e_1},\dots,t_{e_N})=\Phi([0,n]^{d-1}\times\{0\}, H))$.
We have the following martingale decomposition
\[\Var(f)=\sum_{k=1}^ N\E[ (\E(f|\cF_k)-\E(f|\cF_{k-1}))^ 2).\]
Let $(t'_e)$ be an independent family distributed as $(t_e)$ and denote $$t^k:=(t_{e_1},\dots,t_{e_k},t'_{e_{k+1}},\dots,t'_{e_N}),\qquad t^k_a:=(t_{e_1},\dots,t_{e_{k-1}},a,t'_{e_{k+1}},\dots,t'_{e_N})$$ $$\text{and}\qquad t^k_b:=(t_{e_1},\dots,t_{e_{k-1}},b,t'_{e_{k+1}},\dots,t'_{e_N}).$$
In particular, we have \[f(t^k)=(t_{e_k}-a)\mathbf{1}_{f(t^k_b)-f(t^k_a)>0}+f(t^k_a).\]
If $f(t^k_b)-f(t^k_a)>0$ we say that the edge $e_k$ is pivotal.
We can rewrite the expression as follows
\begin{equation*}
    \begin{split}
        \Var(f) =\sum_{k=1}^ N\E[ \E(f(t^k)- f(t^{k-1})|(t_e)_e)^ 2) \, & = \,  \sum_{k=1}^ N\E(\E((t_{e_k}-t'_{e_k})\mathbf{1}_{f(t^k_b)-f(t^k_a)>0} |(t_e)_e)^ 2)\\
        &\ge\Var(t_e)\sum_{k=1}^ N\Prb(f(t^k_b)-f(t^k_a)>0)^2\\
        &\ge \Var(t_e)\sum_{k=1}^ N\Prb(e_k\in\cE_{min},t_{e_k}=b)^2.
    \end{split}
\end{equation*}
When $G(\{b\})>p_c(d)$, there exists $c>0$ such that the number of disjoint paths from the top to the bottom of the cylinder with only edges of time $b$ is at least $cn^ {d-1}$ with high probability (see for instance Theorem 7.68 in \cite{Grimmett99}).
In particular, we have
\[\E[\#\{e\in\cE_{min}, t_e=b\}]\ge c n^ {d-1}.\]
It follows that by Cauchy-Schwarz inequality 
\[\Var(f)\ge\frac{\Var(t_e)}{N}\E[\#\{e\in\cE_{min}, t_e=b\}]^2\ge c_0 \frac{n^{d-1}}{H}\]
where $c_0$ depends on $G$ and $d$.
\qed

\smallskip

The same proof allows us to show that fluctuations for anchored surfaces are not superconcentrated under the following hypothesis \eqref{hyp} of localisation.
For any sequence $(h_n)$ such that $h_n$ goes to infinity with $n$, we have
\begin{equation}\label{hyp}
    \lim_{C\to \infty}\limsup_{n\rightarrow\infty }\frac{1}{n^ {d-1}}\E[\#\{e\in \cE_{\min}:e\notin \{x\in\sR^ d: |x\cdot\mathbf{e}_d-\frac {h_n} {2}|\le C\}]=0\tag{H}
\end{equation}
where $\cE_{min}$ is the minimal cutset for the anchored flow $\tau([0,n]^{d-1}, h_n)$.
\begin{prop}\label{p.anchored}
Under the hypothesis \eqref{hyp}, the variance of the anchored flow $\tau([0,n]^{d-1}, H)$ (defined at the end of the introduction) is in $\Omega(n^{d-1})$.
\end{prop}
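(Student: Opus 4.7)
The plan is to repeat the martingale decomposition argument from the proof of Theorem \ref{thm:main2} and refine the Cauchy--Schwarz step by restricting the sum over edges to a thin horizontal slab around the meridian plane. The factor $1/H$ appearing in Theorem \ref{thm:main2} is produced solely by a global Cauchy--Schwarz with denominator $N=\Theta(Hn^{d-1})$ equal to the total number of edges in the cylinder; hypothesis \eqref{hyp} is precisely what allows one to replace $N$ by $O(n^{d-1})$.

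Concretely, write $\tau=\tau([0,n]^{d-1}\times\{0\},h_n)$ and fix any deterministic ordering of the edges of the cylinder. The same martingale decomposition and pivotal-edge rewriting as in the proof of Theorem \ref{thm:main2} yield
\[
\Var(\tau)\;\geq\;\Var(t_e)\sum_{e}\Prb(e\in\cE_{min},\, t_e=b)^2.
\]
For a constant $C>0$ to be fixed later, introduce the slab of edges
\[
\cS_C \;:=\; \bigl\{e : |e\cdot\mathbf{e}_d - h_n/2|\leq C\bigr\},
\]
whose cardinality satisfies $|\cS_C|\leq \kappa C\, n^{d-1}$ for a dimensional constant $\kappa$. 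Restricting the sum above to $\cS_C$ and applying Cauchy--Schwarz gives
\[
\Var(\tau)\;\geq\;\frac{\Var(t_e)}{\kappa C\, n^{d-1}}\,\bigl(\E[\#\{e\in\cE_{min}\cap\cS_C :\, t_e=b\}]\bigr)^{2}.
\]

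The remaining task is to show $\E[\#\{e\in\cE_{min}\cap\cS_C:\, t_e=b\}]\geq c'\, n^{d-1}$ for some $c'>0$ independent of $n$. Since $G(\{b\})>p_c$, the same supercritical percolation input used in the proof of Theorem \ref{thm:main2} (Theorem 7.68 of \cite{Grimmett99}) furnishes $\Omega(n^{d-1})$ disjoint $b$-open paths joining the bottom and top halves of $\partial\cyl([0,n]^{d-1}\times\{0\},h_n)$; every anchored cutset must cut each of these paths at a $b$-edge, which gives $\E[\#\{e\in\cE_{min}:\, t_e=b\}]\geq c\, n^{d-1}$ for some $c=c(G,d)>0$. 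Invoking \eqref{hyp} with $\eta=c/2$ selects a fixed $C=C(c)$ for which
\[
\limsup_{n\to\infty}\frac{1}{n^{d-1}}\E\bigl[\#\{e\in\cE_{min}:\, e\notin\cS_C\}\bigr] \leq \frac{c}{2};
\]
combining these two bounds, and using that any $b$-edge outside $\cS_C$ is in particular an edge of $\cE_{min}$ outside $\cS_C$, produces $\E[\#\{e\in\cE_{min}\cap\cS_C:\, t_e=b\}]\geq (c/2)\, n^{d-1}$ for $n$ large enough.

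Substituting back into the restricted Cauchy--Schwarz inequality yields $\Var(\tau) \geq \Omega(n^{d-1})$, with implicit constant depending only on $G$, $d$ and the chosen $C$. The argument is a routine adaptation of the proof of Theorem \ref{thm:main2}, and the only nontrivial conceptual point (rather than an obstacle) is the observation that the $H^{-1}$ loss there is an artefact of a global Cauchy--Schwarz, which can be \emph{localised} precisely when the minimal surface itself is known to be localised.
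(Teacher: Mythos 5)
Your proof is correct and follows precisely the route the paper intends: the paper itself offers no explicit proof of Proposition \ref{p.anchored}, only the one-line remark that ``the same proof allows us to show'' the statement, and your argument supplies exactly the missing localisation refinement of the Cauchy--Schwarz step in Theorem \ref{thm:main2}. Two small remarks for completeness: (i) your lower bound on $\E[\#\{e\in\cE_{\min} : t_e=b\}]$ uses the hypothesis $G(\{b\})>p_c$, which Proposition \ref{p.anchored} does not state explicitly but which is clearly meant to be inherited from Theorem \ref{thm:main2}; it would be cleaner to flag this. (ii) The phrase ``any $b$-edge outside $\cS_C$ is in particular an edge of $\cE_{\min}$ outside $\cS_C$'' should read ``any $b$-edge \emph{of} $\cE_{\min}$ outside $\cS_C$ is in particular an edge of $\cE_{\min}$ outside $\cS_C$'', since the inclusion $\{e\in\cE_{\min}: t_e=b,\, e\notin\cS_C\}\subset\{e\in\cE_{\min}: e\notin\cS_C\}$ is what delivers the bound $\E[\#\{e\in\cE_{\min}\cap\cS_C: t_e=b\}]\ge cn^{d-1}-(c/2)n^{d-1}$.
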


\section{Chaoticity of the minimal surface}

Consider the notations of the previous section: $f(t_{e_1},\dots,t_{e_N})=\Phi([0,n]^{d-1}\times\{0\}, H))$. Set $X:=(t_{e_1},\dots,t_{e_N})$. Let $X'$ be an independent vector distributed as $X$. Consider $(U_1,\dots,U_N)$ an i.i.d. family of uniform random variables on $[0,1]$. For any $t\in[0,1]$, we define
\[\forall \,1\le i\le N\quad X^t_i:=\left\{\begin{array}{cc}
    X_i & \mbox{if $U_i\ge t$} \\
    X'_i &\mbox{otherwise.} 
\end{array}\right.\]
Denote by $\mathcal P_t$ the set of pivotal edges for $f(X^ t)$ and by $\mathcal I_t$ the set of edges that are in the intersection of all the minimal surfaces for $f(X^t)$. It is easy to check that $\mathcal I_t \subset \mathcal P_t$.   
Following \cite{chatterjee2014superconcentration}, we obtain the following Corollary of Theorem \ref{thm:main}.
\begin{cor}\label{c.chaotic}There exists a positive constant $C$ such that for any $n\ge1$ and $H\ge h_0 n$ 
\[\forall t\ge 0 \qquad \E[|\mathcal I_0\cap \mathcal I_t|]\le \E[|\mathcal P_0\cap \mathcal P_t|]\le C \frac{n ^ {d-1}}{t\log n\Var(t_e)}.\]
\end{cor}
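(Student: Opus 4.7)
The first inequality is immediate since $\cI_s \subseteq \cP_s$ for every $s$: an edge belonging to every minimum cut-set of $f(X^s)$ is necessarily pivotal, because flipping it from $a$ to $b$ forces any minimiser through a strictly costlier cut-set. The substance of the statement is the second inequality, which I would establish in Chatterjee's superconcentration-implies-chaos paradigm \cite{chatterjee2014superconcentration}, combining Theorem \ref{thm:main} with a covariance-decay identity that is the ``mild extension'' of \cite{tassion2020noise} alluded to in the introduction.

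Write $f := \Phi([0,n]^{d-1}\times\{0\}, H)$ and $\Delta(s) := \Var(f) - \mathrm{Cov}(f(X), f(X^s))$. The central step is to prove the identity
\[
\frac{d}{ds}\Delta(s) = \Var(t_e) \cdot \E\bigl[|\cP_0 \cap \cP_s|\bigr].
\]
To derive it, I would expand $f$ in the orthonormal Fourier basis of the two-point product space $\{a,b\}^N$, which gives $\Delta(s) = \sum_{S \neq \emptyset} \hat f(S)^2 \bigl(1-(1-s)^{|S|}\bigr)$, and compute, for the discrete derivatives $D_e f := f|_{t_e = b} - f|_{t_e = a}$, that
\[
\sum_e \E\bigl[D_e f(X)\, D_e f(X^s)\bigr] = \frac{1}{p(1-p)}\,\frac{d}{ds}\Delta(s),
\]
where $p := G(\{a\})$. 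The key structural input is that $f$ is monotone in each $t_e$ with pivotal increment exactly $b-a$, so that $D_e f(X)\, D_e f(X^s) = (b-a)^2 \ind_{\{e \in \cP_0 \cap \cP_s\}}$; together with $p(1-p)(b-a)^2 = \Var(t_e)$ this yields the identity.

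Next, I would verify monotonicity: $s \mapsto \E[|\cP_0 \cap \cP_s|]$ is non-increasing. Fix an edge $e$; the indicator $g_e := \ind_{\{e \in \cP_0\}}$ does not depend on $t_e$, and the same Fourier computation applied to $g_e$ gives $\Prb(e \in \cP_0 \cap \cP_s) = \E[g_e(X) g_e(X^s)] = \sum_T \hat{g_e}(T)^2 (1-s)^{|T|}$, manifestly non-increasing in $s$; summing over $e$ concludes. Integrating the identity on $[0,t]$ and using this monotonicity then yields
\[
\Var(t_e) \cdot t \cdot \E[|\cP_0 \cap \cP_t|] \,\leq\, \Delta(t) \,\leq\, \Var(f),
\]
and plugging in $\Var(f) \leq C n^{d-1}/\log n$ from Theorem \ref{thm:main} gives exactly the bound claimed in the Corollary.

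The main (and essentially only) non-routine point is verifying the precise combinatorial prefactor $\Var(t_e)$ in the identity when extending Tassion's formulation from Boolean functions on $\{0,1\}^N$ to the real-valued $f$ on the two-point product space $\{a,b\}^N$. Once the identity is in place with this prefactor, the rest of the argument is mechanical; the proof transfers superconcentration, as proved in Theorem \ref{thm:main}, to a quantitative chaotic behaviour of pivotal edges and hence of minimising surfaces.
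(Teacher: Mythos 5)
Your proposal is correct and follows essentially the same route as the paper: both hinge on the Tassion-style variance identity $\Var(f)=\Var(t_e)\int_0^1\E[|\cP_0\cap\cP_s|]\,ds$ (which you derive in differential form via Fourier expansion and then integrate) together with the monotonicity of $s\mapsto\E[|\cP_0\cap\cP_s|]$, and then plug in Theorem~\ref{thm:main}. The only difference is presentational: the paper cites a mild extension of Lemma 3.3 of \cite{tassion2020noise} for the identity and monotonicity, whereas you supply the Fourier-analytic proof of both, including the correct prefactor $\Var(t_e)=p(1-p)(b-a)^2$.
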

More precisely, this result follows from the following mild extension of Lemma 3.3 from \cite{tassion2020noise}.
\begin{lem}[Small extension of Lemma 3.3 in \cite{tassion2020noise}]For any $n\ge1$ and $H\ge h_0 n$, we have
\[\Var (\Phi([0,n]^{d-1}\times \{0\}, H ))=\Var(t_e)\int_0 ^ 1\E[|\mathcal P_0\cap \mathcal P_t|]dt\,.\]

Moreover, the function  $t\to\E[|\mathcal P_0\cap \mathcal P_t|]$ is non-increasing.
\end{lem}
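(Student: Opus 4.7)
The plan is to adapt the noise-interpolation proof of Lemma~3.3 in \cite{tassion2020noise} from geodesics to minimal cut-sets. Let $X'=(t'_e)$ be an independent copy of $X=(t_e)$ and let $(U_e)$ be i.i.d.\ uniform variables on $[0,1]$ independent of everything else; set
\[X^t_e \;:=\; t_e\,\mathbf{1}_{U_e>t}+t'_e\,\mathbf{1}_{U_e\le t},\qquad t\in[0,1],\]
so that $X^0=X$, $X^t$ is distributed as $X$ for every $t$, and $X^0,X^1$ are independent. Writing $f=\Phi$ and $F(t):=\mathrm{Cov}(f(X^0),f(X^t))$, one has $F(0)=\Var f$, $F(1)=0$, and hence $\Var f=-\int_0^1 F'(t)\,dt$. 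The proof reduces to computing $-F'(t)$ and identifying it with the lemma's integrand.

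For each edge $e$, put $\Delta_e f(x):=f(x|_{x_e=b})-f(x|_{x_e=a})\ge 0$ (monotonicity of $\Phi$) and $\eta_e(x):=\mathbf{1}_{x_e=b}-G(\{b\})$, and write $\bar f_e$ for the average of $f$ in the $e$-th coordinate. A direct verification gives $D_e f(x):=f(x)-\bar f_e(x)=\Delta_e f(x)\,\eta_e(x)$. I would compute $\sum_e\E[D_e f(X^0)D_e f(X^t)]$ in two ways. First, using that $(X^0_e,X^t_e)$ is independent of $(X^0_{-e},X^t_{-e})$, that $\Delta_e f$ does not depend on $x_e$, and the elementary two-point identity $\E[\eta_e(X^0_e)\eta_e(X^t_e)]=(1-t)G(\{a\})G(\{b\})$, the sum equals $(1-t)G(\{a\})G(\{b\})\sum_e\E[\Delta_e f(X^0)\Delta_e f(X^t)]$. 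Second, the Hoeffding/Fourier--Walsh decomposition of $f$ on $\{a,b\}^E$ gives $\sum_e\E[D_e f(X^0)D_e f(X^t)]=\sum_{S}|S|\,\hat f(S)^2(1-t)^{|S|}=(1-t)\cdot(-F'(t))$. Equating and dividing by $(1-t)$ yields
\[-F'(t)\;=\;G(\{a\})G(\{b\})\sum_e\E\!\left[\Delta_e f(X^0)\,\Delta_e f(X^t)\right].\]

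The third step identifies this with the pivotal count. Since $\Phi$ is a minimum of linear functions with slopes in $\{0,1\}$ in each coordinate, one has $\Delta_e f\in[0,b-a]$; the combinatorial fact needed is that almost surely $\Delta_e f\in\{0,b-a\}$, i.e.\ that pivotality of $e$ is equivalent to a full $(b-a)$-jump of $\Phi$ when $x_e$ is flipped. Granted this, $\Delta_e f(X^0)\Delta_e f(X^t)=(b-a)^2\mathbf{1}_{e\in\mathcal P_0\cap\mathcal P_t}$, and combined with $\Var(t_e)=(b-a)^2 G(\{a\})G(\{b\})$, integrating $-F'$ over $[0,1]$ produces the variance identity. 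For the monotonicity, write $\E[|\mathcal P_0\cap\mathcal P_t|]=\sum_e\E[g_e(X^0)g_e(X^t)]$ with $g_e(x):=\mathbf{1}_{e\in\mathcal P(x)}$; expanding in the Hoeffding basis gives $\E[g_e(X^0)g_e(X^t)]=\sum_S\widehat{g_e}(S)^2(1-t)^{|S|}$, which is manifestly non-increasing in $t$.

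The main obstacle I anticipate is the step $\Delta_e f\in\{0,b-a\}$: in principle, competing minimum cuts of different cardinalities allow a fractional jump $\Delta_e f\in(0,b-a)$ (e.g.\ when a cut using $e$ and a cut avoiding $e$ tie up to an amount in $(0,b-a)$). Either a combinatorial argument specific to the cylinder geometry is required to rule this out almost surely, or---more likely, given that the result is a \emph{mild} extension of \cite{tassion2020noise}---one reformulates $\mathcal P_t$ so that each edge $e$ contributes with weight $\Delta_e f(X^t)/(b-a)\in[0,1]$ instead of a Boolean indicator, in which case the identification is exact by definition and the preceding argument goes through verbatim.
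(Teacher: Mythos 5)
Your noise-interpolation argument is the natural proof and the computations are sound: the factorization $D_ef=\Delta_ef\cdot\eta_e$, the two-point correlation $\E[\eta_e(X^0_e)\eta_e(X^t_e)]=(1-t)G(\{a\})G(\{b\})$, the Hoeffding identity $\sum_e\E[D_ef(X^0)D_ef(X^t)]=(1-t)(-F'(t))$, and the monotonicity via $\sum_S\widehat{g_e}(S)^2(1-t)^{|S|}$ all check out. The paper does not reproduce a proof of this lemma (it is cited as a small extension of \cite{tassion2020noise}), so there is no internal argument to compare against word for word; this is the argument one would expect.

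The obstacle you flag is genuine, and you have diagnosed it correctly. Writing $m_1$ for the minimum of $T(E)$ over cutsets $E\ni e$ (computed with $t_e=a$) and $m_2$ for the minimum over cutsets $E\not\ni e$, one gets $\Delta_e\Phi=\min(m_1+b-a,\,m_2)-\min(m_1,\,m_2)$, which equals $m_2-m_1\in(0,b-a)$ whenever $m_1<m_2<m_1+(b-a)$. Since $m_2-m_1\in a\Z+b\Z$, such fractional jumps are ruled out precisely when $b-a$ generates the lattice $a\Z+b\Z$, i.e.\ when $(b-a)\mid a$ (which covers the paper's canonical case $a=1$, $b=2$), but for general $0<a<b$ they can occur. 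The identity your derivation actually yields is $\Var\Phi=G(\{a\})G(\{b\})\int_0^1\sum_e\E\bigl[\Delta_e\Phi(X^0)\Delta_e\Phi(X^t)\bigr]\,dt$, and the Boolean form in terms of $|\mathcal P_0\cap\mathcal P_t|$ is exact only under that arithmetic condition; otherwise one should read $|\mathcal P_0\cap\mathcal P_t|$ as the weighted count $\sum_e\Delta_e\Phi(X^0)\Delta_e\Phi(X^t)/(b-a)^2$, precisely the reformulation you propose. With that reading your argument is complete, and you have correctly isolated the one delicate point.
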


\section{Open questions}

\begin{opn}\label{op.anchored}
Prove that anchored maximal flow / minimal surfaces are not superconcentrated in high enough dimension $d$. (Thanks to Proposition \ref{p.anchored}, this boils down to showing that Hypothesis \eqref{hyp} holds). 
\end{opn}


\begin{opn}
Prove superconcentration for maximal flows/minimal surfaces in more general domains, as considered for example in \cite{CerfTheret09geoc,CerfTheret09infc,CerfTheret09supc}. In fact, even extending Theorem \ref{thm:main} to the case of tilted cylinders with a rational slope appears to be challenging as Zhang's inequality from \cite{Zhang2017} relies strongly on symmetry and does not adapt easily to rational directions.
\end{opn}

\begin{opn}\label{op.GeneralG}
In this work, we focused on distributions $G$ taking two values $0<a<b$. It would be interesting to extend this analysis to more general distributions. The  works \cite{BenRos2008,Damron2015}  by Bena\"im--Rossignol and Damron--Hanson--Sosoe, where they extend the study of \cite{BKS} to more general distributions are likely to play a key role here. 
$ $
 
Note that for a continuous distribution $G$, the chaoticity property proved in Corollary \ref{c.chaotic} would be more meaningful as the minimal surface would then be a.s. unique. In particular one would control the true intersection of minimal surfaces before and after noise. 
\end{opn}

\begin{opn}
Our main result, Theorem \ref{thm:main}, only works for thick enough cylinders ($H\geq h_0 n$, for some large enough constant $h_0$). This barrier $h_0$ is there only for technical reasons (coming from Proposition \ref{lem:insidecyl}). Show that the result still holds for any $H\geq \Omega(n^\epsilon)$. 
\end{opn}

\begin{opn}
How do the fluctuations scale with $n$ ? Is there an exponent $\alpha(d)\in (d-2,d-1)$ which describes the variance of $\Phi([0,n]^{d-1}\times\{0\}, H)$ when $H$ is, say, linear in $n$ ?
\end{opn}

\subsection*{Acknowledgments.}
 We wish to thank Itai Benjamini, Guy David, Simon Masnou, Ron Peled and Hugo Vanneuville for  useful discussions. 
The research of B.D is supported by the European Research Council (ERC) under the European Union's Horizon 2020 research and innovation program (grant agreement No
851565). The research of C.G. is supported by the Institut Universitaire de France (IUF) and the French ANR grant ANR-21-CE40-0003.

\bibliographystyle{plain}
\bibliography{biblio}
\end{document}